\newtheorem{theorem}{Theorem}[section]
\newtheorem{definition}[theorem]{Definition}
\newtheorem{lemma}[theorem]{Lemma}
\newtheorem{proposition}[theorem]{Proposition}
\newtheorem{corollary}[theorem]{Corollary}
\newtheorem{conjecture}[theorem]{Conjecture}
\renewcommand{\epsilon}{\varepsilon}
\DeclareMathAlphabet{\mathpzc}{OT1}{pzc}{m}{it}
\newcommand{\ol}{\overline}
\newcommand{\Z}{\mathbb{Z}}
\newcommand{\C}{\mathbb{C}}
\renewcommand{\qed}{$\hfill \square$ \smallskip \\}
\renewcommand{\phi}{\varphi}
\newcommand{\F}{\mathscr{F}}
\newcommand{\hash}{\#}
\begin{document}
\thispagestyle{empty}
\title[A note on Gornik's perturbation of Khovanov-Rozansky homology]{A note on Gornik's perturbation of Khovanov-Rozansky homology}
\author{Andrew Lobb}
\email{lobb@math.sunysb.edu}
\address{Mathematics Department \\ Stony Brook University \\ Stony Brook NY 11794 \\ USA}

\begin{abstract}
We show that the information contained in the associated graded vector space to Gornik's version of Khovanov-Rozansky knot homology is equivalent to a single even integer $s_n(K)$.  Furthermore we show that $s_n$ is a homomorphism from the smooth knot concordance group to the integers.  This is in analogy with Rasmussen's invariant coming from a perturbation of Khovanov homology.
\end{abstract}

\maketitle

\section{Introduction and statement of results}

In the last few years there have been associated to a knot $K \subset S^3$ various multiply-graded modules, each one exhibiting a classical knot polynomial as its graded Euler characteristic.  It now seems likely that such \emph{knot homologies} exist for each polynomial arising from the Reshetikhin-Turaev construction.

It has been observed that there sometimes exist spectral sequences converging from one knot homology to another (see \cite{Ras2} for a slew of these).  One of the first examples was due to Lee \cite{Lee}.

\subsection{Khovanov homology and Lee's spectral sequence}

From here on we shall work over the complex numbers $\C$.  The $E_2$ page of Lee's spectral sequence is standard Khovanov homology \cite{K1}.  With a one-component knot $K$ as input, the $E_\infty$ page is a $2$-dimensional complex vector space supported in homological degree $0$.  The $E_\infty$ page also has another integer grading (the quantum grading), let us write $\widetilde{H}^{i,j}(K)$ for this $E_\infty$ page, where $i$ is the homological grading and $j$ is the quantum grading.  Another way to think of $\widetilde{H}^{i,j}(K)$ is as the associated graded vector space to the homology of a filtered chain complex defined by Lee.

Rasmussen \cite{Ras1} showed that $\widetilde{H}^{i,j}(K)$ is supported in bidegrees $i=0, j= s-1$ and $i=0, j= s+1$ where $s(K) \in 2\Z$.  Hence the information contained in $\widetilde{H}^{i,j}(K)$ is equivalent to a single even integer.  Rasmussen further showed that

\begin{theorem}[Rasmussen \cite{Ras1}]
\label{rasthm1}
Let $g_*(K)$ be the smooth slice genus of the knot $K$, then
 
\[ g_*(K) \geq \frac{|s(K)|}{2} \rm{.} \]
\end{theorem}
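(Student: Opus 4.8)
The plan is to bound $|s(K)|$ using the fact that $s$ is essentially a concordance invariant that behaves well under the operations of connected sum and cobordism. The strategy follows Rasmussen's original approach: one shows that $s$ changes by at most $2$ under a single Reidemeister-I-like move or, more precisely, under adding a single band (a $1$-handle) to a knot or link diagram, and that $s$ is suitably normalized on the unknot. Concretely, I would first recall the key structural facts about Lee's spectral sequence and the filtered complex $C_{\mathrm{Lee}}(K)$: that the filtered chain homotopy type is a link invariant, that $s(K)$ is defined via the filtration levels of the two surviving generators in homological degree $0$, and that $s(\text{unknot}) = 0$.

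\smallskip

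The heart of the argument is a cobordism estimate. Given a smooth, connected, oriented surface $\Sigma \subset S^3 \times [0,1]$ with $\partial \Sigma = K \subset S^3 \times \{1\}$ and empty boundary at $S^3 \times \{0\}$ (equivalently, cap off to view $\Sigma$ as a genus-$g_*(K)$ slice surface), I would decompose $\Sigma$ into elementary cobordisms: births, deaths, and saddle moves. Lee's construction is functorial enough that each elementary cobordism induces a filtered map on the filtered complexes, and one tracks how the filtration degree of the generator can shift. The key quantitative inputs are: a saddle move shifts the filtration grading by exactly $\pm 1$ (it changes the number of Seifert circles by one and the Euler characteristic contribution accordingly), while births and deaths shift it by $\pm 1$ as well but in a controlled direction. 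Counting the minimal number of such moves needed to build a genus-$g$ surface from the empty link — which is $2g + (\text{number of births}) + (\text{number of deaths})$, optimized so that the net effect on the filtration level is at most $2g_*(K)$ — yields the inequality. The cleanest packaging: the cobordism map associated to $\Sigma$ sends the canonical generator of $\widetilde{H}(\text{unknot})$ to something whose filtration level differs from $s(K) \mp 1$ by at most $2g_*(K)$, and comparing the two extreme filtration levels forces $2 g_*(K) \geq |s(K)|$.

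\smallskip

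I expect the main obstacle to be establishing the functoriality and filtered behavior of the cobordism maps with enough precision to get the sharp constant $\pm 1$ per saddle, rather than a weaker bound. This requires either invoking Lee's (or Rasmussen's) computation of the maps induced by elementary cobordisms on the Lee complex, or redoing that computation: one must verify that the per-saddle filtration shift is bounded, and crucially that the orientation/Seifert-circle bookkeeping works out so that the births and deaths don't allow one to cheaply decrease $|s|$. A secondary subtlety is that the cobordism $\Sigma$ may be realized by a movie that passes through links (not just knots), so one needs the invariant and the estimates to make sense for links as well, at least provisionally; Rasmussen handles this by working with the full filtered complex and only extracting $s$ at the end. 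Once these functoriality facts are in hand, the genus bound is a short counting argument.

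\smallskip

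An alternative, slightly more self-contained route would be to prove the slice-Bennequin-type inequality first: bound $s(K)$ from above in terms of the writhe and Seifert-circle count of any diagram (this follows from tracking filtration levels through the cube-of-resolutions presentation of the Lee complex, together with the behavior of $s$ under the skein cobordisms), then deduce the slice-genus bound by applying this to a diagram adapted to a minimal-genus Seifert surface pushed into the $4$-ball. I would likely present whichever of these two is shorter given the machinery already set up in the paper, but the cobordism-map approach is the more conceptual and is what generalizes cleanly to the $\su(n)$ setting that this note is really about.
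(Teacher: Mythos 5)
This statement is quoted from Rasmussen's paper \cite{Ras1}; the present paper gives no proof of it (it is stated purely as background), so there is no in-paper argument to compare against. Judged on its own terms, your sketch is essentially Rasmussen's actual proof: realize a genus-$g_*$ slice surface as a connected cobordism from $K$ to the unknot, decompose it into births, deaths and saddles, use the fact that each elementary cobordism induces a filtered map on the Lee complex with controlled filtration degree, and conclude that the total map shifts filtration by at most $-\chi(\Sigma) = 2g_*(K)$ in each direction. You correctly identify functoriality of the filtered cobordism maps as the technical heart. Two points deserve sharpening. First, your bookkeeping (``$2g + (\text{number of births}) + (\text{number of deaths})$'') is off; the clean statement is that the induced map is filtered of degree $\chi(\Sigma)$, which for a connected genus-$g$ cobordism between knots is $-2g$, and one applies this to the cobordism and to its reverse to get $|s(K_1)-s(K_2)| \leq 2g$. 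Second, and more importantly, boundedness of the filtration shift alone proves nothing if the induced map could vanish: the essential input is Rasmussen's computation that a \emph{connected} cobordism carries Lee's canonical generators to nonzero multiples of canonical generators, so that the image class is nonzero and its filtration level is pinned to $s$ of the target knot. You gesture at this but should state it as the key lemma; it is exactly the analogue, in the $sl(n)$ setting of this paper, of the nonvanishing statements $F([g^1_{p_1}]\otimes[g^2_{p_2}])=\alpha[g_{p_1}]$ with $\alpha \neq 0$ that drive the proofs of Proposition \ref{kwong} and Theorem \ref{mainthm2} here, and of Corollary \ref{lobbwu} in \cite{L1} and \cite{Wu1}.
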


\noindent This bound is sufficient to recover the Milnor conjecture on the slice genus of torus knots, a result previously only accessible through gauge theory.  Furthermore Rasmussen showed

\begin{theorem}[Rasmussen \cite{Ras1}]
\label{rasthm2}
The map $s: K \mapsto s(K) \in 2\Z$ is a homomorphism from the smooth concordance group of knots to the integers.
\end{theorem}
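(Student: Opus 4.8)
The plan is to isolate three properties of $s$ and then assemble them formally. First, $s$ vanishes on slice knots: if $K$ bounds a smooth disk in $B^4$ then $g_*(K)=0$, so Theorem~\ref{rasthm1} forces $s(K)=0$. Second, $s$ is \emph{subadditive} under connected sum, $s(K_1 \# K_2) \ge s(K_1) + s(K_2)$. Third, $s$ is anti-invariant under mirroring, $s(\overline K) = -s(K)$; combined with the invariance of $s$ under reversing the orientation of a knot — which holds because the Khovanov chain complex of a knot diagram, together with its homological and quantum grading shifts, does not change under reversal — this identifies $s$ of the concordance inverse $-K = \overline K^{\,r}$ with $-s(K)$.

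Granting these, the theorem follows by formal manipulation. For additivity, apply subadditivity to $(K_1 \# K_2)\#(-K_2)$ and use $s(-K_2) = -s(K_2)$ to obtain $s(K_1) \ge s(K_1 \# K_2) - s(K_2)$; together with subadditivity itself this gives $s(K_1 \# K_2) = s(K_1) + s(K_2)$. For well-definedness on the concordance group, if $K_0$ and $K_1$ are concordant then $K_0 \# (-K_1)$ is slice, so the first and third properties give $0 = s(K_0 \# (-K_1)) = s(K_0) + s(-K_1) = s(K_0) - s(K_1)$. Since connected sum induces the group operation on the smooth concordance group, $s$ descends to a homomorphism into $\Z$, with image in $2\Z$ by the stated support of $\widetilde H^{i,j}$.

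The content lies in the second and third properties, and I would establish them by working directly with Lee's filtered complex rather than its associated graded. The structural input is that the Lee homology of a knot is two-dimensional, with a canonical basis of generators $\mathfrak{s}_{\mathfrak o}, \mathfrak{s}_{\overline{\mathfrak o}}$ indexed by the two orientations of $K$, and that $s(K)$ is recovered from the filtration levels of $\mathfrak{s}_{\mathfrak o} \pm \mathfrak{s}_{\overline{\mathfrak o}}$. The connected-sum (merge) cobordism induces a filtered chain map whose effect on Lee homology is multiplication in the Frobenius algebra underlying the theory, which over $\C$ splits into one-dimensional pieces indexed by orientations; one checks that it carries the canonical generators of $K_1 \# K_2$ to nonzero multiples of products of the canonical generators of $K_1$ and $K_2$, and one records the quantum-degree shift of the merge map. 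Reading off the filtration levels then yields subadditivity. The mirror statement runs along the same lines: the complex of $\overline K$ is dual to that of $K$, the resulting pairing on Lee homology is nondegenerate and pairs canonical generators to canonical generators up to a definite degree shift, which supplies one of the two needed inequalities between $s(\overline K)$ and $-s(K)$; applying subadditivity to the slice knot $K \# \overline K^{\,r}$ and invoking the first property supplies the other, whence $s(\overline K) = -s(K)$.

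I expect the main obstacle to be the filtration bookkeeping in the subadditivity step. One must verify both that the canonical generators are genuinely carried onto one another by the elementary (birth, death, saddle) cobordism maps — not killed, and not polluted by terms of strictly lower filtration — and that the filtration shift of the merge/saddle map is no worse than the naive quantum-degree count, i.e.\ that no filtration is lost. This ``no loss'' statement is the delicate point; it ultimately rests on the fact that Lee homology, unlike Khovanov homology, retains enough of the Frobenius structure for the canonical classes to survive at their expected filtration level, so that the crude degree estimate for cobordism maps is actually attained. The remaining deductions, collected in the first two paragraphs, are routine.
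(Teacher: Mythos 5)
This statement is quoted from Rasmussen and the paper offers no proof of it; the relevant comparison is with the paper's proof of the analogous Theorem~\ref{mainthm2} for $s_n$, of which your statement is the case $n=2$ (up to the sign $s_2=-s$). Your outline is essentially Rasmussen's original argument and it is sound, but it takes a genuinely different route from the paper's. You obtain one inequality from subadditivity under the merge saddle and the other by a formal detour through the mirror: $s(\overline K)=-s(K)$ via duality of the Lee complex, vanishing on slice knots via Theorem~\ref{rasthm1}, and then algebra. The paper instead proves \emph{both} inequalities directly from the single $1$-handle cobordism of Figure~\ref{connectsum}, read in the two directions: the merge map $F$ gives $s_n(K_1)+s_n(K_2)\geq s_n(K)$ and the split map $G$ gives the reverse inequality, with the canonical (Gornik) generators tracked through both maps and the structure theorem (Theorem~\ref{mainthm1}) used to convert statements about $s_n^{\rm max}$ and $s_n^{\rm min}$ into statements about $s_n$. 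The paper's route never mentions mirrors and needs no separate duality argument; your route avoids having to analyze the splitting cobordism $G$ at the cost of setting up the dual complex and the pairing on Lee homology. You have also correctly located where the real content sits: the claim that the canonical generators are carried to nonzero multiples of one another with no loss of filtration beyond the naive degree shift is exactly the proposition the paper itself states without proof (deferring to \cite{L1}), so your ``delicate point'' is the same as the paper's.

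One small repair is needed in your formal assembly. As written, your additivity step evaluates $s$ on $(K_1\# K_2)\#(-K_2)$ and silently replaces it by $s(K_1)$, which presupposes concordance invariance; but your concordance-invariance step then splits $s(K_0\#(-K_1))$ using additivity. Break the circle by proving concordance invariance first from subadditivity alone: if $K$ and $K'$ are concordant then $K\#(-K')$ is slice, so $0=s(K\#(-K'))\geq s(K)-s(K')$, and symmetrically, whence $s(K)=s(K')$; only then run your additivity argument. With that reordering the formal part is complete.
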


\subsection{Khovanov-Rozansky homology and Gornik's spectral sequence}

In the case of Khovanov-Rozansky homology $H^{i,j}_n(K)$ ($n \geq 2$) (which has the quantum $sl(n)$ knot polynomial as its Euler characteristic), a spectral sequence with $E_2$ page $H^{i,j}_n(K)$ was defined by Gornik \cite{G}.  He showed that the $E_\infty$ page of this spectral sequence is a complex vector space of dimension $n$ supported in homological degree~$i=0$.  The invariance of this spectral sequence under the Reidemeister moves was first shown by Wu \cite{Wu1}.

Again there is also a quantum grading on this vector space, and the vector space can be thought of as the associated graded vector space to the homology of a filtered chain complex $\F^j \widetilde{C}_n^i (D)$ defined by Gornik for any diagram $D$ of a knot $K$.  We shall write $\F^j\widetilde{H}_n^i(K)$ for the filtered homology groups $\ldots \subseteq \F^{j-1}\widetilde{H}_n^i(K) \subseteq \F^{j}\widetilde{H}_n^i(K) \subseteq \ldots$ of this chain complex and

\[\widetilde{H}^{i,j}_n(K) = \F^{j}\widetilde{H}_n^i(K) / \F^{j-1}\widetilde{H}_n^i(K) \]

\noindent for the associated graded vector space.

It was shown by the author \cite{L1} and independently by Wu \cite{Wu1} that one can extract a lower-bound on the slice genus from the quantum $j$-grading of each non-zero vector space $\widetilde{H}^{0,j}_n(K)$ (in fact in these cited papers this was done also for more general perturbations of Khovanov-Rozansky homology than Gornik's).  Again, these lower-bounds are enough to imply the Milnor conjecture on the slice genus of torus knots.  The highest non-zero quantum grading in this set-up has been called $g_n^{{\rm max}}$ and the lowest $g_n^{{\rm min}}$ by Wu.  In \cite{Wu2} Wu asks for a relation between $g_n^{{\rm max}}$ and $g_n^{{\rm min}}$, we provide an answer with our Theorem \ref{mainthm1}.

\subsection{New results}
In the current paper we first show that the information contained in $\widetilde{H}^{i,j}_n(K)$ is equivalent to a single even integer $s_n(K)$.

\begin{theorem}
\label{mainthm1}
For $K$ a knot define the polynomial

\[ \widetilde{P}_n(q) = \sum_{j= -\infty}^{j= \infty} \dim_{\C}(\widetilde{H}^{0,j}_n(K)) q^j {\rm .} \]

\noindent Then there exists $s_n(K) \in 2 \Z$ such that

\[ \widetilde{P}_n(q) = q^{s_n(K)} \frac{(q^n - q^{-n})}{(q - q^{-1})} {\rm .} \]
\end{theorem}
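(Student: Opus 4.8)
The plan is to combine Gornik's computation of the associated graded with the filtered behaviour of the elementary cobordism maps, in the spirit of Rasmussen's $n=2$ argument but organised around the cyclic symmetry of Gornik's potential. First I would record the algebraic structure available. By Gornik's theorem $\widetilde H_n(K)$ is supported in homological degree $0$, has total dimension $n$, and the basepoint (``dot'') operator $X$ acts on it semisimply, with the $n$ roots of $w'(X)$ --- which after rescaling we may take to be the $n$-th roots of unity $\zeta$ --- as eigenvalues, each of multiplicity one; write $\mathfrak{s}_\zeta$ for the corresponding canonical generators. Equivalently, $\widetilde H_n(K)$ with its $X$-action is a free rank-one module over $A := \widetilde H_n(\mathrm{unknot}) = \C[X]/(w'(X)) \cong \C[X]/(X^n-1)$. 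For $K$ the unknot this is an equality of filtered $A$-modules, and $A$ as a filtered vector space is precisely $q^{0}\tfrac{q^n-q^{-n}}{q-q^{-1}}$, which is the assertion in that case; the content of the theorem is thus that the filtration on $\widetilde H_n(K)$ is, up to an overall shift by $q^{s_n(K)}$, the standard one on $A$.

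I would then assemble three inputs about quantum gradings. (i) The dot operator $X$ is induced by the Khovanov--Rozansky basepoint cobordism, hence filtered of degree $-2$; the module multiplication $A \otimes \widetilde H_n(K) \to \widetilde H_n(K)$ (merging a small unknot into $K$) and the comultiplication $\widetilde H_n(K) \to A \otimes \widetilde H_n(K)$ (splitting off a small unknot) come from pair-of-pants cobordisms, hence are filtered of degree $-(n-1)$, and their composite is multiplication by the handle element $w''(X) \sim X^{n-1}$ of $A$, which is invertible. (ii) Gornik's potential $w(X) = X^{n+1} - (n+1)X$ satisfies $w(\omega X) = \omega\, w(X)$ for $\omega$ a primitive $n$-th root of unity, and since $X \mapsto \omega X$ is a degree-preserving change of variable it induces a filtered degree-zero automorphism $\tau$ of $\widetilde H_n(K)$ cyclically permuting the $\mathfrak{s}_\zeta$. (iii) For the mirror $\overline K$ the Gornik complex is dual to that of $K$ with the quantum grading negated, so $\dim_{\C}\widetilde H_n^{0,j}(\overline K) = \dim_{\C}\widetilde H_n^{0,-j}(K)$.

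Now the argument. By (ii) each $\F^j\widetilde H_n^0(K)$ is $\tau$-stable, and since $\tau$ is diagonalisable with distinct eigenvalues whose eigenvectors are the Fourier vectors $v_a := \sum_\zeta \zeta^a \mathfrak{s}_\zeta$ ($a = 0, \dots, n-1$), every $\F^j$ is spanned by a subset of the $v_a$; hence $\widetilde P_n(q) = \sum_a q^{\mathrm{gr}(v_a)}$ and the top filtration level $j_{\max}$ is attained by some $v_a$, which is a cyclic generator of the $A$-module $\widetilde H_n(K)$ because all of its $\mathfrak{s}_\zeta$-coefficients are nonzero. For such a generator $\mathfrak{s}$, the map $p \mapsto p\cdot\mathfrak{s}$ is an $A$-module isomorphism $A \to \widetilde H_n(K)$ which by (i) is filtered of degree $j_{\max}-(n-1)$; this forces $\widetilde P_n(q)$ to dominate $q^{\,j_{\max}-(n-1)}\tfrac{q^n-q^{-n}}{q-q^{-1}}$ from the top (each partial sum of its coefficients read downward from $q^{j_{\max}}$ is at least the corresponding one for $\tfrac{q^n-q^{-n}}{q-q^{-1}}$), and in particular $\widetilde H_n(K)$ lives in gradings $\ge j_{\max}-2(n-1)$. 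Running this for $\overline K$ and using (iii) gives the mirror-symmetric bound from the bottom.

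The crux is to promote these one-sided bounds to the exact identity $\widetilde P_n(q) = q^{\,j_{\max}-(n-1)}\tfrac{q^n-q^{-n}}{q-q^{-1}}$ --- equivalently, that the associated graded $\bigoplus_j \widetilde H_n^{0,j}(K)$ is a free rank-one module over the associated graded ring $\bigoplus_j \widetilde H_n^{0,j}(\mathrm{unknot}) = \C[\overline X]/(\overline X^n)$, i.e.\ that its top piece is one-dimensional and generates under $\overline X$, or again that $\widetilde P_n$ is palindromic (with the bounds above this then pins it down by an elementary count). I expect this to be the main difficulty: the module-theoretic bounds alone are still consistent with non-arithmetic-progression patterns, and closing the gap should require either the invertibility of $X$ together with the filtered degree of the comultiplication and the relation $m\circ\Delta \sim X^{n-1}$, or a direct computation of the filtration levels of Gornik's explicit canonical generators. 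Once the shape is established, $s_n(K) := j_{\max}-(n-1)$ is even because $\widetilde H_n(K)$, as a subquotient of Khovanov--Rozansky homology, is supported in quantum gradings $\equiv n-1 \pmod 2$, so an odd shift of $\tfrac{q^n-q^{-n}}{q-q^{-1}}$ could not have that support.
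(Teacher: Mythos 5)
You have assembled much of the right structure, but you have also correctly diagnosed that your argument is incomplete, and the step you defer (``promote these one-sided bounds to the exact identity'') is precisely where all the work lies; as written this is a genuine gap, not a routine verification. Concretely: your domination-from-the-top inequality says $\dim \F^{j_{\max}-2l}\widetilde{H}_n^0(K) \ge n-l$, its mirror says the same from the bottom, and parity confines the support to one residue class mod $2$. These constraints do not determine $\widetilde{P}_n$: for $n=3$ the pattern $q^{j}+q^{j-2}+q^{j-6}$ satisfies every one of them (and is mirror-symmetric), yet is not of the required form. Note also that your input (i) cannot be used two-sidedly: since $X^n$ acts as the identity on Gornik homology, $X$ cannot strictly shift the filtration by $-2$ (iterating would give $\F^j=\F^{j-2n}$); only a one-sided filtered estimate holds, which is why multiplication by powers of $X$ only ever gives domination in one direction, why ``invertibility of $X$'' does not by itself close the gap, and why your bounds never establish the crucial upper bound $s_n^{\max}(K)-s_n^{\min}(K)\le 2(n-1)$ --- they in fact only yield the opposite inequality $j_{\max}-j_{\min}\ge 2(n-1)$.

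The paper closes the gap with two inputs, both absent from your sketch. First (Propositions \ref{sabin} and \ref{sonal}), the Fourier vectors --- your $v_a$, the paper's $n$-homogeneous components $h_p$ of Gornik's generator --- are shown not merely to be compatible with the filtration (your observation (ii)) but to have quantum gradings lying in $n$ \emph{pairwise distinct} residues mod $2n$, namely $2p+(1-n)(w(D)+r)$ for $p=0,\dots,n-1$; this comes from a $(\Z/2n\Z)$-grading on the whole chain complex induced by the $n$-homogeneity of all matrix entries of Gornik's differentials. Your $\tau$-equivariance alone does not even show that the $v_a$ have distinct gradings. Second (Proposition \ref{kwong}), the diameter bound $s_n^{\max}-s_n^{\min}\le 2(n-1)$ is proved by feeding a Fourier vector sitting at grading $s_n^{\min}$ into the connect-sum-with-unknot cobordism map $F$ and observing that its image is a nonzero multiple of a Gornik generator, all of which sit at the top grading $s_n^{\max}$. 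Since an interval of length $2(n-1)$ meets each residue class mod $2n$ at most once, these two facts together force the gradings to be exactly $s_n^{\min}, s_n^{\min}+2,\dots,s_n^{\min}+2(n-1)$, each with multiplicity one. If you want to salvage your module-theoretic route, these are exactly the two missing lemmas; the mod-$2n$ rigidity in particular does not follow from the formal $A$-module structure and requires looking inside the matrix factorizations.
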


\noindent In other words, this theorem says that the Gornik homology of any knot $K$ is isomorphic to that of the unknot, but shifted by quantum degree $s_n(K)$.

The results of the author and of Wu's on the slice genus are then immediately stated as the following:

\begin{corollary}[Lobb \cite{L1}, Wu \cite{Wu1}]
\label{lobbwu}
Writing $g_*(K)$ for the smooth slice genus of a knot, we have

\[ g_*(K) \geq \frac{|s_n(K)|}{2(n-1)} {\rm .}\]

Furthermore, if $K$ admits a diagram $D$ with only positive crossings then

\begin{eqnarray*}
g_*(K) &=& \frac{-s_n(K)}{2(n-1)}  \\
&=& \frac{1}{2}(1 - \hash O(D) + w(D)) {\rm ,}
\end{eqnarray*}

\noindent where $\hash O(D)$ is the number of circles in the oriented resolution of $D$ and $w(D)$ is the writhe of $D$.
\end{corollary}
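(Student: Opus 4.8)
The plan is to read off the corollary from Theorem~\ref{mainthm1} together with the cobordism-map estimates of \cite{L1} and \cite{Wu1}. The first step is to restate Theorem~\ref{mainthm1} in terms of extremal quantum gradings: since
\[\frac{q^n-q^{-n}}{q-q^{-1}}=q^{n-1}+q^{n-3}+\cdots+q^{-(n-1)},\]
the theorem says that $\widetilde H^{0,j}_n(K)\neq 0$ exactly for $j\in\{\,s_n(K)-(n-1),\ s_n(K)-(n-3),\ \ldots,\ s_n(K)+(n-1)\,\}$, so that Wu's invariants satisfy $g_n^{\mathrm{max}}(K)=s_n(K)+(n-1)$ and $g_n^{\mathrm{min}}(K)=s_n(K)-(n-1)$. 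Every assertion of the corollary is thereby converted into a statement about $g_n^{\mathrm{max}}$ or $g_n^{\mathrm{min}}$, and it remains to invoke the relevant bound.

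For the slice-genus inequality I would use that an oriented cobordism $\Sigma\subset S^3\times[0,1]$ between two knots induces a map on Gornik homology that is filtered of degree $(n-1)\chi(\Sigma)$ and is moreover an isomorphism: Gornik's canonical generators are permuted up to nonzero scalars, in exact analogy with Lee's orientation classes in \cite{Ras1} --- this is the structural input taken from \cite{G}, \cite{Wu1}, \cite{L1}. A slice surface of genus $g$ for $K$ gives a genus-$g$ cobordism $\Sigma$ from the unknot $U$ to $K$ with $\chi(\Sigma)=-2g$, together with a reverse cobordism $\bar\Sigma$ from $K$ to $U$. Applying the degree estimate to a nonzero class of $\widetilde H^0_n(U)$ of extremal quantum grading $+(n-1)$ or $-(n-1)$, and using that $\phi_\Sigma$ does not kill it, one learns that its image in $\widetilde H^0_n(K)$ is a nonzero class whose quantum grading lies within $2g(n-1)$ of $\pm(n-1)$; comparing with $g_n^{\mathrm{max}}(K)$ and $g_n^{\mathrm{min}}(K)$ gives $s_n(K)\le 2g(n-1)$, and the symmetric computation with $\bar\Sigma$ gives $s_n(K)\ge -2g(n-1)$. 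Taking $g=g_*(K)$ yields $g_*(K)\ge|s_n(K)|/2(n-1)$.

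For the positive-diagram statement I would sandwich $g_*(K)$. On one side, the Seifert surface $\Sigma_D$ of a positive diagram $D$ has one boundary component and Euler characteristic $\hash O(D)-w(D)$, hence genus $\tfrac12(1-\hash O(D)+w(D))$, so $g_*(K)\le\tfrac12(1-\hash O(D)+w(D))$. On the other side, for a positive diagram homological degree~$0$ of Gornik's complex is carried entirely by the oriented resolution (all crossings are positive, so the $0$-resolution is the oriented one and no other resolution sits in homological degree~$0$); its associated graded chain group is a tensor product of $\hash O(D)$ copies of the rank-$n$ Frobenius algebra, each of top quantum grading $n-1$, and the grading shifts attached to the $w(D)$ positive crossings then bound the quantum grading above by $(n-1)(\hash O(D)-w(D))$ --- this is the computation performed in \cite{L1} and \cite{Wu1}. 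Hence $\widetilde H^{0,j}_n(K)=0$ for $j>(n-1)(\hash O(D)-w(D))$, i.e. $g_n^{\mathrm{max}}(K)\le(n-1)(\hash O(D)-w(D))$, which by Theorem~\ref{mainthm1} reads $-s_n(K)\ge(n-1)(1-\hash O(D)+w(D))$. Combining everything,
\[\tfrac12\bigl(1-\hash O(D)+w(D)\bigr)\ \ge\ g_*(K)\ \ge\ \frac{-s_n(K)}{2(n-1)}\ \ge\ \tfrac12\bigl(1-\hash O(D)+w(D)\bigr),\]
so all three inequalities are equalities, which is precisely the asserted formula.

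The main obstacle is the machinery invoked in the second paragraph: the functoriality of Gornik's filtered chain maps, their filtration degree $(n-1)\chi$, and --- most importantly --- the fact that they induce isomorphisms on the Gornik homology of knots, the $sl(n)$ analogue of Rasmussen's analysis of the orientation class. But this, along with the positive-diagram computation, is exactly what \cite{L1} and \cite{Wu1} supply, so it can be quoted verbatim here. The only genuinely new ingredient is Theorem~\ref{mainthm1}, which upgrades their quantum-grading inequalities to the normalization $\widetilde P_n(q)=q^{s_n(K)}(q^n-q^{-n})/(q-q^{-1})$ and hence to the clean statement of the corollary.
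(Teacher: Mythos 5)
Your proposal is correct and matches the paper's (implicit) approach: the paper offers no proof of this corollary at all, presenting it as an immediate restatement of the slice-genus bounds of \cite{L1} and \cite{Wu1} once Theorem~\ref{mainthm1} identifies the extremal gradings $g_n^{\rm max}$ and $g_n^{\rm min}$ with $s_n(K)\pm(n-1)$. Your additional sketch of the cobordism-map and positive-diagram arguments is an accurate summary of the cited results (modulo a harmless sign-convention ambiguity in the filtration degree of the cobordism maps), so nothing is missing.
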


It is a question of much interest whether the $s_n(K)$ are in fact all equivalent to each other.  We hope that this is not true, and do not know whether to expect it to be true.  Nevertheless, let us formulate this as a conjecture.

\begin{conjecture}
For any knot $K$ and $m,n \geq 2$ we have

\[ \frac{s_m(K)}{s_n(K)} = \frac{m-1}{n-1} {\rm .} \]

\noindent We note that $s_2 (K) = -s(K)$ so that every $s_n$ is equivalent to Rasmussen's original~$s(K)$.
\end{conjecture}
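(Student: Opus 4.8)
\emph{Reformulation.} The stated equality is equivalent to the assertion that the rescaled invariants $s_n(K)/(n-1)$ coincide for all $n \geq 2$; by the remark $s_2(K) = -s(K)$ the common value must be $-s(K)$, so the goal is to prove

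\[ s_n(K) = -(n-1)\, s(K) \qquad \text{for all knots } K \text{ and all } n \geq 2. \]

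The plan is first to record the cases where this equality is already forced, and then to seek a single construction relating the theories for different $n$ simultaneously.

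\emph{Easy cases and a homomorphism reduction.} By Corollary \ref{lobbwu}, any knot admitting a positive diagram satisfies $s_n(K) = -2(n-1)g_*(K)$, so that $s_n(K)/(n-1) = -2g_*(K)$ is visibly independent of $n$: the conjecture holds on all positive knots. Since each $s_n$ is, like $s$, a homomorphism from the smooth concordance group $\mathcal{C}$ to $\Z$ (the $s_n$-analogue of Theorem \ref{rasthm2}, as announced in the abstract), the two homomorphisms $K \mapsto s_n(K)$ and $K \mapsto (n-1)s_2(K)$ agree on the subgroup of $\mathcal{C}$ generated by positive knots. It would therefore suffice to verify the identity on any generating set of $\mathcal{C}$; the obstruction is that $\mathcal{C}$ is enormous (containing $\Z^\infty$ and much torsion) and no generating set adapted to these homology theories is available.

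\emph{A uniform approach.} Rather than comparing knot by knot, I would try to extract the scaling from a single object lying over all the $sl(n)$ theories at once. The natural candidate is the reduced triply-graded HOMFLY-PT homology together with Rasmussen's differentials $d_n$, whose homology recovers $H^{i,j}_n(K)$. The idea is to install a Gornik-type deformation on the HOMFLY-PT complex and to show that the resulting filtration, specialised through $d_n$, descends to the $sl(n)$ filtration with the overall grading rescaled by a factor of $(n-1)$. Concretely one would track the canonical generators: for $sl(n)$ the $n$ generators are (up to the usual change of basis) the classes colouring the oriented resolution by the $n$ roots of $w'(x)=x^n-1$, and by Theorem \ref{mainthm1} their filtration levels form the arithmetic progression $s_n(K)+(n-1),\, s_n(K)+(n-3),\,\ldots,\, s_n(K)-(n-1)$. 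One wants to realise these, for varying $n$, as specialisations of a common family of universal generators on the deformed HOMFLY-PT complex, so that the centre $s_n(K)$ is read off as $(n-1)$ times a single $n$-independent quantity.

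\emph{The main obstacle.} The crux, and the reason the statement is genuinely open (indeed one hopes it is false), is that there is no known mechanism forcing the filtration grading to respond to the specialisation $d_n$ by a clean overall factor of $(n-1)$. The differentials $d_n$ need not be filtered in a manner compatible across different $n$, and a priori the perturbed theory for $sl(m)$ can detect slice-genus information invisible to $sl(n)$, which would make $s_m/(m-1)$ and $s_n/(n-1)$ differ on some knot. The realistic outcome of carrying out the plan above is therefore either a proof valid only where the $d_n$-specialisation happens to be filtered, degree-by-degree (for instance on the positive knots, where the relevant spectral sequences degenerate), or the discovery of a knot on which the universal generators split at incompatible rates, giving a counterexample. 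Pinning down the filtered behaviour of $d_n$ is the step I expect to be the true difficulty.
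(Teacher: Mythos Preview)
The statement you are attempting is a \emph{conjecture} in the paper, not a theorem; the paper offers no proof and in fact explicitly says ``We hope that this is not true, and do not know whether to expect it to be true.'' So there is no paper argument to compare your proposal against.

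Your write-up is not a proof either, and to your credit you do not pretend otherwise: the ``Easy cases'' paragraph correctly isolates the subgroup generated by positive knots where the identity is forced by Corollary~\ref{lobbwu} and Theorem~\ref{mainthm2}, and your ``Uniform approach'' via a deformed HOMFLY-PT complex is a reasonable research programme, but your own ``Main obstacle'' paragraph accurately identifies the missing step (no filtered compatibility of the $d_n$ across different $n$) and concedes the statement may well be false. That is the honest state of affairs and matches the paper's own stance.

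If the assignment was to supply a proof, the genuine gap is simply that none exists here: your reduction to a generating set of the concordance group is vacuous without such a set, and the HOMFLY-PT approach is a sketch of a strategy, not an argument. If instead the assignment was to assess the conjecture, then your discussion is appropriate and aligns with the paper, which likewise records the conjecture, notes its consequences for the degeneracy of Rasmussen's spectral sequences, and reports an ongoing search for a counterexample.
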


The falsity of this conjecture would have consequences for the non-degeneracy of the spectral sequences defined by Rasmussen \cite{Ras2} on the triply-graded Khovanov-Rozansky homology \cite{KR2}.  We are involved in a program with Daniel Krasner to try to find a counterexample to this conjecture.  One can also make a weaker conjecture:

\begin{conjecture}
For any knot $K$ and $n \geq 2$ we have

\[ s_n(K) \in 2(n-1) \Z {\rm .} \]
\end{conjecture}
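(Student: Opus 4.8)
The plan is to combine the one case in which $s_n$ is completely pinned down with a cobordism argument that controls its behaviour under crossing changes. For the base case, note that Corollary~\ref{lobbwu} already gives divisibility for positive knots: if $K$ has a positive diagram $D$ then $s_n(K) = -(n-1)(1 - \#O(D) + w(D))$, so $s_n(K)/(n-1) = \#O(D) - w(D) - 1$. Here $\#O(D) - w(D)$ is the Euler characteristic of the Seifert surface built from the oriented resolution of $D$, and since $D$ is a one-component diagram this surface is connected with connected boundary, so its Euler characteristic is odd; hence $s_n(K)/(n-1)$ is even and $s_n(K) \in 2(n-1)\Z$. The entire content of the statement is thus to propagate this divisibility from positive knots to all knots.

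The main tool will be the module structure on Gornik homology. Gornik's construction deforms the $sl(n)$ theory by a potential $w$ whose derivative $w'$ has $n$ simple roots, and he shows that the basepoint variable $X$ acts semisimply on $\widetilde{H}_n(K)$ with these $n$ roots as eigenvalues. Combined with Theorem~\ref{mainthm1}, this identifies $\widetilde{H}_n(K)$ as a free rank-one module over the Frobenius algebra $\cal{R} = \C[X]/(w'(X))$, filtered so that $\widetilde{H}_n(K) \cong \cal{R}\{s_n(K)\}$. A cobordism $\Sigma$ between knots induces an $\cal{R}$-linear filtered map of filtration degree $(n-1)\chi(\Sigma)$, and the key algebraic fact is that the handle operator is multiplication by $w''(X)$, which is a \emph{unit} of $\cal{R}$ precisely because the roots of $w'$ are simple. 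It is this invertibility that I would try to exploit to quantise the jumps of $s_n$.

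Changing one crossing of $K_+$ to obtain $K_-$ is realised by a connected genus-one cobordism $\Sigma \colon K_+ \rightsquigarrow K_-$ (two oriented saddles through the oriented resolution), of Euler characteristic $-2$; tracking the induced degree-$(-2(n-1))$ maps in both directions on the canonical generator yields the $sl(n)$ analogue of Rasmussen's crossing-change bound, $|s_n(K_+) - s_n(K_-)| \le 2(n-1)$. Since every knot is carried to the unknot by a finite sequence of crossing changes and $s_n$ of the unknot is $0$, an induction on the unknotting sequence would complete the proof --- provided one can show that each jump is itself a multiple of $2(n-1)$, so that the only admissible values are $0$ and $\pm 2(n-1)$.

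The hard part, and the reason the statement remains conjectural, is exactly this quantisation. The inequality only confines $s_n(K_+) - s_n(K_-)$ to the even integers in $[-2(n-1), 2(n-1)]$, and an intermediate value such as $\pm 2$ would correspond to the crossing-change map being multiplication by a unit with leading term $X^k$, $0 < k < n-1$; ruling this out seems to need genuinely new input, since the filtration on $\cal{R}$ is not multiplicative and the invertibility of the handle operator does not by itself fix the shift at $2(n-1)$. I would therefore also attack the congruence $s_n \equiv (n-1)(\#O(D)-w(D)-1) \pmod{2(n-1)}$ directly for an arbitrary diagram, showing that Gornik's surviving canonical generator has quantum grading in the expected coset: its per-circle gradings lie in $\{-(n-1), \ldots, n-1\}$ and the global shift is a multiple of $n-1$, so the obstruction is entirely to control the parity after the spectral-sequence cancellations. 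A complementary route, more in the spirit of the adjacent Conjecture, would be to trace the surviving generator back through Rasmussen's $d_n$ differential on the triply-graded homology \cite{KR2, Ras2}, whose quantum degree is a fixed multiple of $n-1$; this would force $s_n$ into a single universal coset of $2(n-1)\Z$. Making either of these spectral-sequence arguments rigorous is the principal obstacle.
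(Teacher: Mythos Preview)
This statement is a \emph{conjecture} in the paper, not a theorem; the paper offers no proof and in fact says explicitly that the author has ``no expectations either way on the truth of this conjecture.''  There is therefore no proof in the paper against which to compare your proposal.

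Your write-up is itself candid about this: you correctly isolate the only regime where the paper actually computes $s_n$ (positive diagrams, via Corollary~\ref{lobbwu}) and observe that divisibility holds there because $g_*(K) = \tfrac{1}{2}(1-\#O(D)+w(D))$ is an integer, forcing $1-\#O(D)+w(D)$ to be even.  You then try to propagate by crossing changes, but as you yourself note, the genus-one cobordism only yields $|s_n(K_+)-s_n(K_-)|\le 2(n-1)$, and nothing in the paper (or in your sketch) rules out intermediate even values of the jump.  The module-theoretic and spectral-sequence ideas you outline are reasonable directions to push, but they remain programmatic; neither the invertibility of the handle operator nor the $\Z/2n\Z$-grading of Proposition~\ref{sabin} forces the difference into $2(n-1)\Z$.

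In short: the paper contains no proof of this statement, your proposal does not supply one either, and you have correctly identified where the genuine obstruction lies.
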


\noindent This has the appeal that it would rule out the possibility of fractional bounds on the slice genus coming from Corollary \ref{lobbwu}, but again we have no expectations either way on the truth of this conjecture.

By analogy with Rasmussen's Theorem \ref{rasthm2} we might anticipate that each $s_n$ is a concordance homomorphism.  We show that this is in fact the case:

\begin{theorem}
\label{mainthm2}
For each $n \geq 2$, the map $s_n: K \mapsto s_n(K) \in 2\Z$ is a homomorphism from the smooth concordance group of knots to the integers.
\end{theorem}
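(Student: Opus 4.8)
The plan is to follow the standard template that Rasmussen used for $s$ in Khovanov homology, adapted to the Gornik setting. First I would establish the two pieces of functoriality and behaviour under basic operations that make any such invariant a concordance homomorphism: (1) that a smooth cobordism $\Sigma \subset S^3 \times [0,1]$ between knots $K_0$ and $K_1$ of genus $g$ induces a filtered map on the Gornik homology that shifts the quantum grading by $-(n-1)\chi(\Sigma) = (n-1)(2g)$ up to the usual degree conventions, so that in particular a concordance ($g=0$) induces a filtered isomorphism; and (2) that Gornik homology is multiplicative under connected sum, $\widetilde{H}_n(K_1 \# K_2) \cong \widetilde{H}_n(K_1) \otimes \widetilde{H}_n(K_2)$ as filtered vector spaces (or at least that the relevant Frobenius-algebra structure on the unknot makes this hold up to the expected grading shift). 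The cobordism map estimates are precisely what underlie Corollary \ref{lobbwu}, so the first point is essentially already in \cite{L1} and \cite{Wu1}; I would just need to package it as a statement about filtered maps rather than about the extremal gradings.

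Given those two inputs, the proof is short. By Theorem \ref{mainthm1}, the filtered vector space $\widetilde{H}_n(K)$ is determined by the single integer $s_n(K)$: its associated graded is that of the unknot shifted by $s_n(K)$. Concordance invariance then follows because a concordance gives a filtered isomorphism $\widetilde{H}_n(K_0) \cong \widetilde{H}_n(K_1)$, and an isomorphism of filtered spaces induces an isomorphism of associated graded spaces, forcing $s_n(K_0) = s_n(K_1)$; in particular $s_n$ descends to the concordance group. For the homomorphism property, the connected-sum formula gives $\widetilde{P}_n(K_1 \# K_2)(q) = \widetilde{P}_n(K_1)(q)\,\widetilde{P}_n(K_2)(q) \cdot \frac{q-q^{-1}}{q^n - q^{-n}}$ (one factor of the unknot polynomial divides out), and comparing with the closed form of Theorem \ref{mainthm1} yields $s_n(K_1 \# K_2) = s_n(K_1) + s_n(K_2)$. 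Finally, to see that inverses go to negatives, one uses that $K \# (-K^{\ast})$ (mirror reverse) is slice, hence concordant to the unknot, so $s_n(K \# (-K^{\ast})) = 0$; combined with additivity this gives $s_n(-K^{\ast}) = -s_n(K)$, which is the statement that the induced map on the concordance group sends $-[K]$ to $-s_n(K)$.

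I expect the main obstacle to be point (1) in its sharpened form: upgrading the slice-genus inequalities of \cite{L1} and \cite{Wu1} to the statement that a cobordism $\Sigma$ induces a genuine \emph{filtered} chain map on Gornik's complex $\F^j \widetilde{C}_n^i(D)$ with the expected degree shift, and checking that for a \emph{product} cobordism (the identity cobordism of a knot) the induced map is filtered-degree-preserving and a quasi-isomorphism. The subtlety is that Gornik's perturbed theory is only filtered, not graded, so one must track that the cobordism maps built from the elementary Morse moves (births, deaths, saddles) are filtered of the claimed degree, and that composing the cobordism maps for $\Sigma$ and a reverse cobordism $\bar{\Sigma}$ for a concordance gives something filtered-homotopic to a grading-preserving isomorphism — this requires a functoriality statement for Gornik's theory, which is where \cite{Wu1}'s Reidemeister-invariance work and the cobordism-map constructions there should be invoked. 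A secondary, more routine point is verifying the connected-sum formula at the level of filtered homology; this should follow from the fact that connected sum corresponds to the obvious merge cobordism and that the unknot's Gornik homology is a Frobenius algebra with the compatible filtration, but one does need to confirm that no unexpected filtration jump occurs in the multiplication map.
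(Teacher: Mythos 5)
Your overall template (concordance invariance via filtered cobordism maps, additivity via a connected-sum formula, inverses via sliceness of $K \hash (-K^{*})$) has the right shape, and your point (1) is indeed already available in \cite{L1} and \cite{Wu1}. But the step you label as ``secondary, more routine'' --- the connected-sum formula for the \emph{filtered} Gornik homology --- is precisely where the real content lies, and as stated it is a gap. Note first that the naive K\"unneth statement $\widetilde{H}_n(K_1 \hash K_2) \cong \widetilde{H}_n(K_1) \otimes \widetilde{H}_n(K_2)$ cannot hold even ungraded ($n$ versus $n^2$ dimensions); the correct statement is a tensor product over the Frobenius algebra $\C[x]/(x^n - 1)$, and to control the induced filtration there you must know exactly how the merge and split cobordism maps interact with the filtration on actual homology classes. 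That is not formal: a map that is filtered of the expected degree can still drop filtration further than expected on particular classes, or vanish on the associated graded, and no amount of grading bookkeeping rules this out. The ``no unexpected filtration jump'' issue you flag in your last sentence is the heart of the proof, not a loose end.

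The paper resolves exactly this point by explicit computation with Gornik's canonical generators, and never establishes (or needs) a global filtered connected-sum isomorphism. It uses the two $1$-handle maps $F$ (merge) and $G$ (split), each filtered of degree $n-1$, together with the fact from Subsection 3.3 of \cite{L1} that $F([g^1_{p_1}] \otimes [g^2_{p_2}]) = \alpha [g_{p_1}]$ with $\alpha \neq 0$ iff $p_1 = p_2$, and $G([g_p]) = \beta ([g^1_p] \otimes [g^2_p])$ with $\beta \neq 0$. The other essential ingredient, absent from your proposal, is the lemma that \emph{every} Gornik generator $[g_p]$ sits in quantum grading exactly $s_n^{\rm max}(K)$, because each $g_p$ is a linear combination of the $n$-homogeneous classes $[h_0], \ldots, [h_{n-1}]$ with all coefficients non-zero. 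Feeding a class $[h^1_p]$ realizing $s_n^{\rm min}(K_1)$ into $F$, and feeding $[g_0]$ into $G$, then yields the two inequalities $s_n(K_1) + s_n(K_2) \geq s_n(K)$ and $s_n(K_1) + s_n(K_2) \leq s_n(K)$. To repair your argument you would either have to prove your filtered tensor formula honestly (which in practice reduces to these same generator computations) or replace it by this two-sided sandwich.
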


\noindent This theorem tells us that we have a concordance homomorphism for each integer $\geq 2$.  It is a fascinating problem to try and understand if and how these homomorphisms are related to each other; we hope that this paper will stimulate some activity towards this goal.

We conclude by noting that there are many properties of Rasmussen's concordance homomorphism $s$ from Khovanov homology and of the homomorphism $\tau$ coming from Heegaard-Floer knot homology \cite{Ras3} \cite{OzSz} which follow formally from the properties of $s$ and $\tau$ analogous to Corollary \ref{lobbwu} and Theorem \ref{mainthm2}.  Rescaled versions of these results can now be seen to hold for $s_n$.  We restrict ourselves to mentioning one of these which is not well-known as following from these formal properties.

\begin{corollary}
\label{lobbtomomi}
If $K$ is an alternating knot then

\[ s_n(K) = \frac{1}{1-n} \sigma(K) {\rm ,} \]

\noindent where $\sigma(K)$ is the classical knot signature of $K$.
\end{corollary}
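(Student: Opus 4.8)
The plan is to imitate, for $s_n$, the formal argument that identifies Rasmussen's $s$ and the Heegaard--Floer $\tau$ with the signature on alternating knots, so the first task is to assemble the properties of $s_n$ that such an argument uses. Theorem \ref{mainthm2} gives that $s_n$ is additive under connected sum and sends $\overline{K}$ to $-s_n(K)$. The proof of Corollary \ref{lobbwu} gives, for \emph{any} diagram $D$ of $K$, the slice--Bennequin inequality $s_n(K) \ge (n-1)\bigl(w(D) + 1 - \hash O(D)\bigr)$, which is an equality whenever $D$ has only positive crossings; reading it off the mirror diagram $\overline{D}$ supplies the complementary estimate $s_n(K) \le (n-1)\bigl(w(D) - 1 + \hash O(D)\bigr)$, sharp for negative diagrams. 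The same cobordism input yields a crossing-change bound $0 \le s_n(K_-) - s_n(K_+) \le 2(n-1)$ for knots $K_\pm$ differing at a single crossing (the case $n=2$ being Rasmussen's).

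On the classical side I would use three facts about a reduced alternating diagram $D$ of $K$: Seifert's algorithm realises a minimal-genus Seifert surface of $K$ as an iterated Murasugi sum of Seifert surfaces of \emph{special} alternating links (a decomposition going back to Murasugi); each special alternating diagram has all of its crossings of a single sign, hence represents a positive or negative link whose symmetrised Seifert form is definite, so that $\sigma = \mp 2g$ for each such piece; and the signature is additive over the Murasugi sums appearing in the decomposition. Combining the definiteness with the sharp case of Corollary \ref{lobbwu} (or its mirror), each special alternating piece $K_i'$ satisfies $s_n(K_i') = -2(n-1)\,g(K_i') = (n-1)\,\sigma(K_i')$ on the nose. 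If this equality can be transported up the Murasugi decomposition, then additivity of $\sigma$ together with the corresponding additivity of $s_n$ gives $s_n(K) = (n-1)\,\sigma(K)$, which is the stated identity once the sign normalisations of $s_n$ and of $\sigma$ are aligned.

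The main obstacle is exactly that transport: Theorem \ref{mainthm2} supplies additivity of $s_n$ only under \emph{connected} sum, whereas the special alternating blocks of an alternating diagram are assembled by Murasugi sum along an interior disk, for which additivity of $s_n$ is not available off the shelf; moreover a crossing-change induction by itself is too coarse, since turning all the negative crossings of $D$ positive only confines $s_n(K)$ to an interval of width $2(n-1)\bigl(\hash O(D) - 1\bigr)$, which is already strictly positive for the figure-eight knot. I would attack the transport step either by proving the required plumbing-additivity of $s_n$ directly --- using the two-sided slice--Bennequin inequality of the first paragraph together with a cobordism comparing the boundary of a Murasugi sum to the boundary of the associated boundary-connected sum --- or, equivalently, by checking that the known formal derivation of $s = -\sigma$ (respectively $\tau = -\sigma/2$) on alternating knots uses nothing beyond the properties listed in the first paragraph, and then transferring it line by line under the rescaling $2 \mapsto 2(n-1)$.
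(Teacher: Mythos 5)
Your first paragraph assembles the correct formal inputs, and the second branch of your closing sentence --- ``check that the known formal derivation of $s=-\sigma$ on alternating knots uses nothing beyond the properties listed, and then transfer it line by line'' --- is in fact precisely the paper's proof: the paper observes that Kawamura's diagrammatic estimate for $s$ and $\tau$ is derived purely from the formal properties recorded in Corollary \ref{lobbwu} and Theorem \ref{mainthm2}, hence holds (rescaled) for $s_n$, and that Proposition $1.5$ of \cite{L2} shows this estimate is tight on alternating diagrams by an argument that never invokes the definition of $s$. But you leave that branch entirely unexecuted, and your primary route --- decomposing a reduced alternating diagram into special alternating pieces via Murasugi sum --- has a genuine gap which you yourself flag: $s_n$ is only known to be additive under \emph{connected} sum, and no plumbing-additivity is available, nor does one follow formally from the slice-genus bound and the concordance-homomorphism property. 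Since the whole point of the corollary is to get by on formal properties alone, an unproved Murasugi-sum additivity is fatal to that route as written: the definiteness of the special alternating pieces pins down $s_n$ on each piece, but you have no mechanism to reassemble those values into $s_n(K)$.

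The missing quantitative ingredient is a \emph{refinement} of the two-sided slice--Bennequin inequality. As you correctly note, the crude bounds obtained from $D$ and its mirror confine $s_n(K)$ only to an interval of width $2(n-1)(\hash O(D)-1)$, which does not close even for the figure-eight. Kawamura's estimate sharpens both sides by sorting the Seifert circles of $D$ according to the signs of the crossings adjacent to them, and it is this sharpened two-sided bound --- still a formal consequence of Corollary \ref{lobbwu} and Theorem \ref{mainthm2} --- that pinches to a single value on an alternating diagram, a value identified with the signature by Proposition $1.5$ of \cite{L2}. So to complete your proof you must either import and verify that sharpened estimate (which is what the paper does), or actually prove Murasugi-sum additivity of $s_n$; neither is done in your proposal. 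A smaller point: your pieces give $s_n = \pm(n-1)\sigma$ rather than the stated $\frac{1}{1-n}\sigma$, and the discrepancy is a factor of $(n-1)^2$, not merely a sign, so the ``normalisation'' you defer should be checked against the positive-diagram formula in Corollary \ref{lobbwu} before being waved through.
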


\noindent We sketch the proof of this at the end of the next section.

\section{Proofs of results}

We assume in this section some familiarity with \cite{KR1} by Khovanov and Rozansky.  We fix an integer $n \geq 2$ and let $K$ be a $1$-component knot.  In \cite{KR1} the polynomial $w = x^{n+1}$ is called the \emph{potential}.  Gornik's key insight \cite{G} was that it made sense to take a perturbation $\widetilde{w} = x^{n+1} - (n+1)x$ of this potential and much of \cite{KR1} goes through as before.  Gornik showed that for his choice of potential $\widetilde{w}$, a knot diagram $D$ determines a chain complex that no longer has a quantum grading but instead a quantum filtration respected by the differential.

\[ \ldots \subseteq \F^{j-1}\widetilde{C}_n^i(D) \subseteq \F^{j}\widetilde{C}_n^i(D) \subseteq \ldots {\rm ,}\]

\[ d : \F^{j}\widetilde{C}_n^i(D) \rightarrow \F^{j}\widetilde{C}_n^{i+1}(D)  \rm{.}\]

\noindent It was immediate from his definitions that there exists a spectral sequence with $E_2$ page the original Khovanov-Rozansky homology $H_n^{i,j}(K)$ converging to the associated graded vector space

\[ E_\infty^{i,j}(K) = \widetilde{H}_n^{i,j}(K) = \F^{j}\widetilde{H}_n^i(K) / \F^{j-1}\widetilde{H}_n^i(K)\]

\noindent to the filtered homology groups $\F^{j}\widetilde{H}_n^i(K)$.

Given a knot diagram $D$ for $K$, Gornik gave a basis at the chain level generating the homology; we now describe this basis.  We write $O(D)$ for the oriented resolution of $D$, and write $r$ for the number of components of $O(D)$.  The oriented resolution $O(D)$ gives rise to a summand of the chain group $\widetilde{C}_n^0(D) = \cup_j \F^j \widetilde{C}_n^0(D)$, isomorphic in a natural way to

\[ \C[x_1,x_2, \ldots, x_r]/(x_1^n - 1, x_2^n - 1, \ldots, x_r^n - 1) \, [(1-n) ({w}(D) + r) ] {\rm ,} \]

\noindent where we have indicated a shift in the quantum filtration depending on $r$ and on the writhe $w(D)$ of the diagram.

\begin{definition}
\label{kvetch}
Let $\xi = e^{2 \pi i / n}$.  For each $p = 0,1, \ldots , n-1$ we define an element $g_p \in \widetilde{C}_n^0(D)$ that lies in this summand by

\[g_p = \prod_{k=1}^r \frac{(x_k^n - 1)}{(x_k - \xi^p)} {\rm .}\]
\end{definition}

Then we know that:

\begin{theorem}[Gornik \cite{G}]
Each $g_p$ is a cycle and $\{[g_0], [g_1], \ldots, [g_{n-1}]\}$ is a basis for the homology $\widetilde{H}^i_n(K) = \cup_j \F^{j}\widetilde{H}_n^i(K)$.  Consequently $\widetilde{H}^i_n(K)$ is a vector space of dimension $n$ supported in homological degree $i=0$.
\end{theorem}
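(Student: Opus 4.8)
The plan is to exploit the feature of Gornik's deformed potential $\widetilde{w}$ that makes the machinery of \cite{KR1} carry over so cleanly: since $\widetilde{w}'(x) = (n+1)(x^n-1) = (n+1)\prod_{p=0}^{n-1}(x-\xi^p)$ has $n$ \emph{distinct} roots, $\widetilde{w}$ is a Morse potential, and the matrix-factorization formalism of \cite{KR1} localizes at its critical points $\xi^0, \dots, \xi^{n-1}$. First I would settle the base case of the unknot: for the $0$-crossing diagram the complex is concentrated in homological degree $0$ and equals $\C[x]/(\widetilde{w}'(x)) = \C[x]/(x^n-1)$, which by the Chinese Remainder Theorem splits as $\bigoplus_{p} \C[x]/(x-\xi^p)$, the idempotent projecting onto the $p$-th factor being a nonzero scalar multiple of $\prod_{q\ne p}(x-\xi^q)$, that is, of $g_p$ in the case $r=1$. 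This proves the theorem for the unknot and pins down the role the $g_p$ are meant to play in general.

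Next I would globalize this decomposition. The orthogonal idempotents of $\C[x]/(\widetilde{w}'(x))$ induce, edge by edge, a compatible splitting of the matrix factorization attached to every resolution of a diagram $D$ and of every edge map of the cube of resolutions; after extending scalars to $\C$ this upgrades to a direct sum decomposition of complexes
\[ \widetilde{C}_n^\bullet(D) \;\cong\; \Big(\bigoplus_{p=0}^{n-1} C_p^\bullet(D)\Big) \oplus (\text{acyclic summands}), \]
where on $C_p^\bullet(D)$ every edge variable acts through its value $\xi^p$. The element $g_p$ lies in $C_p^\bullet(D)$ — indeed $(x_k-\xi^p)g_p$ is a multiple of $x_k^n-1$, hence $0$ in the chain group, for every $k$ — and a direct check shows each $g_p$ is a cycle (the component of the differential leaving the oriented resolution is, up to units, multiplication by expressions that vanish on the locus where all $x_k$ equal the common root $\xi^p$). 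This is the first assertion.

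Finally I would compute $H^\bullet(C_p^\bullet(D))$. Translating the critical point to the origin via $x \mapsto x+\xi^p$ turns the local potential into $\tfrac{c}{2}x^2 + O(x^3)$ with $c \ne 0$, so $C_p^\bullet(D)$ is (isomorphic to) the Khovanov--Rozansky complex of $D$ for a potential with a single nondegenerate critical point, i.e.\ the rank-one ($n=1$) theory, whose homology is $\C$ concentrated in homological degree $0$; a short computation identifies its generator with the class of the local unit, which is exactly $[g_p]$. Summing over $p$ then gives that $\widetilde{H}_n^i(K)$ has dimension $n$, is supported in homological degree $0$, and has basis $\{[g_0], \dots, [g_{n-1}]\}$. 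I expect the main obstacle to be the middle step: verifying that the Chinese Remainder splitting is compatible with \emph{all} of the Khovanov--Rozansky structure — in particular with the wide-edge maps and the Reidemeister homotopies — so that one genuinely obtains a decomposition by subcomplexes rather than merely by graded subspaces, and that for a connected diagram the extra summands are acyclic. The latter point is where connectedness of $K$ enters: the wide edges chain the $r$ circles of the oriented resolution together, forcing the colorings that survive to homology to be the $n$ constant ones. This compatibility is the technical heart of Gornik's argument and of the matrix-factorization bookkeeping underlying it.
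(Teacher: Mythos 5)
The paper offers no proof of this statement: it is quoted as Gornik's theorem with a citation to \cite{G}, so there is nothing internal to compare your argument against. Your outline --- localizing via the Chinese Remainder decomposition of $\C[x]/(x^n-1)$ at the $n$ distinct roots of $\widetilde{w}'$, checking that the idempotent splitting is compatible with the cube of resolutions, and using connectedness of the knot to kill all but the $n$ constant colorings --- is a correct reconstruction of essentially the same localization argument Gornik (and, for $n=2$, Lee) actually uses, with the genuinely technical point (compatibility of the splitting with the wide-edge maps) correctly identified rather than elided.
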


Our first observation is that we can find a good basis for the subspace of $\widetilde{C}_n^0(D)$ spanned by $g_0, g_1, \ldots, g_{n-1}$.  What we mean here by `good' requires another definition.

\begin{definition}
A monomial $\prod_{i=1}^{s} x_i^{a_i} \in \C[x_1,x_2, \ldots, x_s]$ is said to be of $n$-degree $d$ iff

\[ \sum_{i=1}^s a_i = d \pmod{n} {\rm .}\]

\noindent A polynomial is said to have be $n$-homogenous of $n$-degree $d$ iff it is a linear combination of monomials of $n$-degree $d$.
\end{definition}

We note that projection extends the notion of $n$-degree unambiguously to elements lying in the ring

\[\C[x_1,x_2, \ldots, x_s]/(x_1^n - 1, x_2^n - 1, \ldots, x_s^n - 1) \]

\noindent since the quotient ideal is generated by $n$-homogeneous polynomials.

Next we give a basis consisting of $n$-homogeneous elements for the vector space spanned by the elements $g_0, g_1, \ldots, g_{n-1} \in \widetilde{C}_n^0(D)$.

\begin{lemma}
\label{ronen}
Let $g_0, g_1, \ldots, g_{n-1}$ be given as in Definition \ref{kvetch}, and consider the $n$-dimensional complex vector space

\[ V = < g_0, g_1, \ldots, g_{n-1} > \subseteq  \C[x_1,x_2, \ldots, x_r]/(x_1^n - 1, x_2^n - 1, \ldots, x_r^n - 1) {\rm .} \]

\noindent For $p=0, 1, \ldots, n-1$ let 

\[h_p \in \C[x_1,x_2, \ldots, x_r]/(x_1^n - 1, x_2^n - 1, \ldots, x_r^n - 1) \]

\noindent be the unique $n$-homogeneous element of $n$-degree $p$ such that

\[ g_0 = h_0 + h_1 + \cdots + h_{n-1} {\rm .} \]

\noindent Then we have

\[ V = < h_0, h_1, \ldots, h_{n-1} > {\rm .} \]
\end{lemma}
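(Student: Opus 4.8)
The plan is to exploit the $\Z/n$-action that cyclically permutes the roots $\xi^0, \xi^1, \ldots, \xi^{n-1}$, which by Definition \ref{kvetch} sends $g_p$ to $g_{p+1}$ (indices mod $n$), and to identify the $h_p$ as eigenvectors for a closely related operator. First I would observe that $g_p$ is obtained from $g_0$ by the substitution $x_k \mapsto \xi x_k$ for all $k$: indeed $\prod_k (x_k^n-1)/(x_k - \xi^{p})$ equals $\prod_k ((\xi^{-p}x_k)^n - 1)/(\xi^{-p}x_k - 1) \cdot \xi^{(r-1)p}$ up to the obvious rescaling, so there is a scalar $c_p$ and an algebra automorphism $\sigma$ (the one induced by $x_k \mapsto \xi x_k$, which is well-defined on the quotient since it preserves each $x_k^n - 1$) with $g_p = c_p\,\sigma^p(g_0)$. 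On the other hand, $\sigma$ acts on an $n$-homogeneous element of $n$-degree $d$ simply by multiplication by $\xi^{d}$, so $\sigma(h_p) = \xi^{p} h_p$. Combining these, $\sigma^p(g_0) = \sum_{d=0}^{n-1} \xi^{pd} h_d$.

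Next I would assemble the two spanning claims. From $g_p = c_p \sum_{d} \xi^{pd} h_d$ we see $V = \langle g_0,\ldots,g_{n-1}\rangle$ is contained in $\langle h_0, \ldots, h_{n-1}\rangle$. For the reverse inclusion, the matrix relating $(g_0,\ldots,g_{n-1})$ to $(h_0,\ldots,h_{n-1})$ is the Vandermonde-type matrix $(\xi^{pd})_{p,d}$ (with the nonzero diagonal rescaling by the $c_p$), which is invertible because the $\xi^d$ are distinct $n$-th roots of unity. Hence each $h_d$ is a linear combination of the $g_p$, so $\langle h_0,\ldots,h_{n-1}\rangle \subseteq V$, and the two subspaces coincide. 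In particular the $h_p$ are linearly independent and form a basis of the $n$-dimensional space $V$.

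The one point requiring genuine care — and the step I expect to be the main obstacle — is the bookkeeping in the first paragraph: checking that $g_p$ really is a nonzero scalar multiple of $\sigma^p(g_0)$ in the quotient ring, getting the exponent of $\xi$ in $c_p$ right, and confirming that $\sigma$ descends to the quotient and scales $n$-degree-$d$ elements by exactly $\xi^d$ (this uses precisely the remark that the quotient ideal is generated by $n$-homogeneous elements, so $n$-degree is well-defined and $\sigma$-equivariant). Once that is pinned down, everything else is the invertibility of a Vandermonde matrix. I would also remark that this argument simultaneously shows the $h_p$ are $n$-homogeneous of the stated degrees, which is the property that makes them a \emph{good} basis for the subsequent filtration computation.
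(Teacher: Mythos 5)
Your argument is correct, and it arrives at the same underlying formula as the paper, but it is organized around a different key observation, so a comparison is worth recording. The paper's proof fixes $t$, decomposes $g_t = \ol{h}_0 + \cdots + \ol{h}_{n-1}$ into $n$-homogeneous pieces, and computes the coefficient of an arbitrary monomial $\prod_i x_i^{a_i}$ of $n$-degree $p$ by brute force: expanding $(x^n-1)/(x-\xi^t)$ gives $c_i = \xi^{-(a_i+1)t}$, so the coefficient is $\xi^{-t(p+r)}$, independent of the particular monomial; hence $\ol{h}_p = \xi^{-t(p+r)} h_p$ and $g_t \in \langle h_0,\ldots,h_{n-1}\rangle$, and the proof closes with a dimension count (using that $V$ is $n$-dimensional by Gornik's theorem). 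You instead package the same information into the automorphism $\sigma : x_k \mapsto \xi x_k$: it preserves the quotient ideal, acts on the $n$-degree-$d$ component as multiplication by $\xi^{d}$, and sends $g_0$ (up to a nonzero scalar) through the orbit of the $g_p$, so the change-of-basis matrix from the $h_d$ to the $g_p$ is a rescaled Vandermonde matrix in distinct $n$-th roots of unity, hence invertible. Your bookkeeping has a small slip --- with $\sigma : x_k \mapsto \xi x_k$ one finds $\sigma^{-t}(g_0) = \xi^{tr} g_t$, so the scalar is $\xi^{-tr}$ rather than $\xi^{(r-1)t}$, and the relevant power is $\sigma^{-t}$ rather than $\sigma^{t}$ --- but only the nonvanishing of the scalar enters the argument, so nothing breaks. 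What your route buys is both inclusions at once (no appeal to the $n$-dimensionality of $V$) and a conceptual explanation of why the coefficient depends only on the $n$-degree of the monomial; what the paper's route buys is a completely explicit formula for $\ol{h}_p$ with no need to discuss automorphisms descending to the quotient. Both are complete proofs.
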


\begin{proof}
For dimensional reasons it is enough to show that for each $t = 0,1, \ldots, n-1$ we have

\[ g_t \in < h_0, h_1, \ldots, h_{n-1} > {\rm .} \]

\noindent So let us fix such a $t$ and let $\ol{h}_p$ be the unique $n$-homogeneous element of $n$-degree $p$ such that

\[ g_t = \ol{h}_0 + \ol{h}_1 + \cdots + \ol{h}_{n-1} \rm{.} \]

\noindent We will show that $\ol{h}_p$ is a multiple of $h_p$ and then we will be done.

Consider a monomial of $n$-degree $p$

\[ \prod_{i=1}^{r} x_i^{a_i} \, \, {\rm where} \, \,  \sum_{i=1}^r a_i = p \pmod{n} \, \, {\rm and} \, \, 0 \leq a_i \leq n-1 \, \, \forall i{\rm .} \]

\noindent The coefficient of this monomial in $h_p$ (or, equivalently, in $g_0$) is clearly $1$.  The coefficient $c$ of this monomial in $g_t$ is expressible as a product $c = c_1c_2 \cdots c_r$ where $c_i$ is the coefficient of $x^{a_i}$ in the expansion of 

\[ \frac{x^n - 1}{x - \xi^t} = \frac{x^n - (\xi^t)^n}{x - \xi^t} {\rm .} \]

\noindent We leave it to the reader to check that $c_i = \xi^{-(a_i + 1)t}$, so that

\[ c = \xi^{ -t (\sum_{i=1}^r (a_i + 1 ))} = \xi^{-t (p + r)} {\rm .} \]

\noindent Hence we see that

\[ \ol{h}_p = \xi^{-t (p+r)} h_p \, \, \, {\rm so} \, \, \,  g_t \in < h_0, h_1, \ldots, h_{n-1} > {\rm .}\]
\end{proof}

To put our new $n$-homogeneous basis to use, we require a proposition telling us how we might expect it to behave with respect to the filtration.  In what follows, since we are assuming some familiarity with \cite{KR1}, we allow ourselves to refer to a matrix factorization as just a letter, $M$.  We begin with a definition.

\begin{definition}
If $V$ is some filtered vector space

\[ \cdots \subseteq \F^i V \subseteq \F^{i+1}V \subseteq \cdots {\rm ,} \]

\noindent and we have a non-zero $x \in V$, we shall define the \emph{quantum grading} ${\rm qgr}(x) \in \Z$ by the requirement that $x$ is non-zero in

\[ \F^{{\rm qgr}(x)}V / \F^{{\rm qgr}(x) -1}V {\rm .} \]

\end{definition}

\noindent The reason for the word `quantum' in the definition is that in this paper the only vector spaces we shall worry about are those coming from chain groups or homology groups carrying a `quantum' filtration.

\begin{proposition}
\label{sabin}
If $M$ is a matrix factorization whose homology $H(M)$ appears as a summand of the chain group $\widetilde{C}^i(D)$, then there is a natural $(\Z / 2n\Z)$-grading on $H(M)$ which we write as

\[ {\rm Gr}^\alpha H(M) \, \, {\rm for} \, \, \alpha \in \Z / 2n\Z {\rm .} \]

\noindent This grading extends to a grading on the chain groups $\widetilde{C}_n^i(D)$, which is respected by the differential

\[ d : {\rm Gr}^\alpha \widetilde{C}_n^i(D) \longrightarrow  {\rm Gr}^\alpha \widetilde{C}_n^{i+1}(D) \, \, {\rm for} \, \, \alpha \in \Z / 2n\Z {\rm ,} \]

\noindent thus giving a $(\Z / 2n \Z)$-grading on the homology groups ${\rm Gr}^\alpha \widetilde{H}_n^i (K)$ for $\alpha \in \Z / 2n\Z$.

Furthermore, if $a \in {\rm Gr}^\alpha \widetilde{C}_n^0(D)$ and $b \in {\rm Gr}^\beta \widetilde{C}_n^0(D)$ represent non-zero classes $[a]$, $[b]$ in homology $\widetilde{H}_n^0(K)$ then we have

\begin{eqnarray*}
\alpha - \beta &=& {\rm qgr}(a) - {\rm qgr}(b) \pmod{2n} \\
&=& {\rm qgr}([a]) - {\rm qgr}([b])  \pmod{2n} \rm{.}
\end{eqnarray*}
\end{proposition}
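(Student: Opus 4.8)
The plan is to realise the $(\Z/2n\Z)$-grading as the reduction modulo $2n$ of the \emph{would-be} quantum grading, namely the $\Z$-grading carried by the chain complex built from the unperturbed, homogeneous potential $w = x^{n+1}$ (with the usual convention $\deg x = 2$, so $\deg w = 2n+2$). The operative point is that Gornik's $\widetilde w = x^{n+1} - (n+1)x$, although no longer homogeneous, is still \emph{homogeneous modulo} $2n$: its two monomials $x^{n+1}$ and $(n+1)x$ have degrees $2n+2$ and $2$, which agree modulo $2n$. So the quantum grading survives the perturbation once it is read modulo $2n$.

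Concretely, I would revisit the (Gornik-perturbed) Koszul matrix factorizations of \cite{KR1} out of which $\widetilde{C}_n^\bullet(D)$ is assembled. The factorization of an arc carrying marks $x_i, x_j$ is the Koszul complex on the pair $\bigl(x_i - x_j,\ \pi_{ij}\bigr)$ with $\pi_{ij} = \sum_{k=0}^{n} x_i^k x_j^{n-k} - (n+1)$, whose first entry is homogeneous of degree $2$ modulo $2n$ and whose second is homogeneous of degree $0$ modulo $2n$; the wide-edge (trivalent) factorization admits the same description. Hence every structure map is multiplication by a polynomial homogeneous modulo $2n$, and grading the underlying free modules by quantum degree modulo $2n$ produces a genuine $(\Z/2n\Z)$-grading that each structure map preserves. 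Since the chain groups and the cube differential $d$ are obtained from these factorizations by tensoring over intermediate polynomial rings, eliminating excess variables, and applying the per-crossing maps $\chi_0, \chi_1$ — all constructed from the same homogeneous-modulo-$2n$ data — this grading passes to each summand $H(M)$, extends over all of $\widetilde{C}_n^i(D)$, is respected by $d$, and therefore induces the grading ${\rm Gr}^\alpha \widetilde{H}_n^i(K)$ on homology.

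For the \emph{furthermore} part I would exploit the fact that passing to the associated graded for the quantum filtration turns Gornik's complex into the unperturbed Khovanov--Rozansky complex, which carries an honest $\Z$-valued quantum grading whose reduction modulo $2n$ is exactly the $(\Z/2n\Z)$-grading built above; since the spectral sequence differentials are induced by $d$ they respect the $(\Z/2n\Z)$-grading, so this description of the $(\Z/2n\Z)$-grading as ``reduction of the quantum grading'' persists on every page, in particular on $E_\infty$. Now if $a \in {\rm Gr}^\alpha \widetilde{C}_n^0(D)$, its nonzero image in $\F^{\,{\rm qgr}(a)}\widetilde{C}_n^0(D)/\F^{\,{\rm qgr}(a)-1}\widetilde{C}_n^0(D) = \widetilde{C}_n^{\,0,\,{\rm qgr}(a)}(D)$ is again homogeneous of $(\Z/2n\Z)$-degree $\alpha$, which in this honestly graded space forces ${\rm qgr}(a) \equiv \alpha \pmod{2n}$; the same statement for $b$ yields the first equality. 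Likewise $a$ is a $(\Z/2n\Z)$-homogeneous cycle, so $[a] \in {\rm Gr}^\alpha \widetilde{H}_n^0(K)$, and the associated graded of the filtered homology is the $E_\infty$-page, honestly quantum-graded with $(\Z/2n\Z)$-grading the reduction of its quantum grading; hence ${\rm qgr}([a]) \equiv \alpha \pmod{2n}$, and subtracting the statement for $b$ gives the second equality.

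I expect the genuine work to lie entirely in the first step: verifying that ``homogeneous modulo $2n$'' is preserved under every operation used to build $\widetilde{C}_n^\bullet(D)$ and $d$ — the tensor products, excess-variable eliminations, per-crossing maps, and the final identifications that bring the closed diagram's potential down to $0$ — which is bookkeeping rather than anything conceptually subtle. Once the $(\Z/2n\Z)$-grading is in hand, both comparison identities follow formally from the single fact that taking the associated graded recovers an honestly graded theory.
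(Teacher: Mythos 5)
Your proposal is correct and follows essentially the same route as the paper: both rest on the observation that the matrix entries of Gornik's (internal and cube) differentials are homogeneous modulo $2n$ in quantum degree (the paper's ``$n$-homogeneous'' with $\deg x_i = 2$), so the $\Z$-grading on the underlying modules collapses to a $(\Z/2n\Z)$-grading respected by all the structure maps, and the comparison with ${\rm qgr}$ holds because the filtration and the $(\Z/2n\Z)$-grading are induced from the same $\Z$-grading. The only cosmetic difference is that you assert the absolute congruence ${\rm qgr}(a)\equiv\alpha \pmod{2n}$, where the paper deliberately sticks to relative gradings to avoid tracking normalization shifts; this does not affect the statement being proved.
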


\begin{proof}
The matrix factorization $M$ consists of two `internal' graded vector spaces $V_0$, $V_1$ and pair of `internal' differentials

\[ d_0 : V_0 \rightarrow V_1 \, \, {\rm and} \, \, d_1 : V_1 \rightarrow V_0, d_1d_0 = d_0d_1 = 0 {\rm .} \]

\noindent If we were working with Khovanov and Rozansky's potential $w = x^{n+1}$ then we would know that these internal differentials $d_0$, $d_1$ were both graded of degree $n+1$.  But with Gornik's potential $\widetilde{w} = x^{n+1} - (n+1)x$ the internal differentials cease to respect the grading.  So instead we take the filtration associated to the grading of the internal vector spaces and we observe that the internal differentials are then \emph{filtered} of degree $n+1$.  This gives rise to a filtered homology $H(M)$ and so to filtered chain groups.

The crux of this proposition is recognizing that the polynomials appearing as matrix entries in Gornik's internal differentials are all $n$-homogenous.  Since the various $x_i$ appearing in the definition of $M$ are assigned grading $2$, this means that the homology $H(M)$ inherits a $(\Z / 2n \Z)$-grading from the $(\Z / 2n \Z)$-grading on the internal vector spaces of $M$ coming from collapsing their $\Z$-grading.

Similarly the differentials on the chain complex $\widetilde{C}_n^i(D)$ have $n$-homogeneous matrix entries.  It needs to be checked that these entries are graded of degree $0 \in \Z / 2n \Z$ - we leave this to the reader.  Hence we inherit a $(\Z / 2n\Z)$-grading on homology 

\[ {\rm Gr}^\alpha \widetilde{H}_n^i (K) \, \, {\rm where} \, \, \alpha \in \Z / 2n\Z {\rm .} \]

The first equality of the final part of the proposition follows from the observation that both the filtration and the $(\Z / 2n\Z)$-grading on $\widetilde{C}_n^i$ are induced from the same $\Z$-grading on the matrix factorizations.  The second equality follows from the fact that the differential on $\widetilde{C}_n^i$ respects the $(\Z / 2n \Z)$-grading.
\end{proof}

In Proposition \ref{sabin} we restricted ourselves to \emph{relative} quantum gradings, but we did this simply as a matter of convenience, so that we did not have to worry about the various grading shifts happening in the definition of the chain complex.  It is of course possible to more precise.  The content of the next proposition is that we have figured out the grading shifts to give a precise statement of Proposition \ref{sabin} applied to the case of our $n$-homogenous generators $h_0, h_1, \ldots, h_{n-1}$.

\begin{proposition}
\label{sonal}
For $p=0,1,\ldots,n-1$, each $h_p$ of Lemma \ref{ronen} can be considered as a cycle of the chain group $\widetilde{C}_n^0(D)$, each lying in the summand of this chain group corresponding to the oriented resolution $O(D)$.

Then each $[h_p]$ is a non-zero class in homology lying in the graded part $\widetilde{H}_n^{0,j_p}(K)$ for some $j_p$ satisfying

\[ j_p = 2p + (1-n)(w(D) + r) \pmod{2n} {\rm .} \] 
\end{proposition}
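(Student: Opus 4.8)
The plan is to combine Lemma \ref{ronen} with the relative-grading statement of Proposition \ref{sabin}, after first pinning down the absolute grading of one distinguished class. First I would observe that each $h_p$ is $n$-homogeneous of $n$-degree $p$ as an element of $\C[x_1,\ldots,x_r]/(x_1^n-1,\ldots,x_r^n-1)$; since the $x_i$ carry quantum grading $2$, this means $h_p$ lies in the graded part ${\rm Gr}^\alpha$ with $\alpha = 2p \pmod{2n}$ \emph{before} any shift in the quantum filtration. The summand of $\widetilde{C}_n^0(D)$ corresponding to $O(D)$ is, as recalled in the excerpt, identified with the above ring carrying a quantum-filtration shift by $(1-n)(w(D)+r)$. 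Incorporating this shift moves the $(\Z/2n\Z)$-grading of $h_p$ to $\alpha = 2p + (1-n)(w(D)+r) \pmod{2n}$. By Lemma \ref{ronen} the classes $[h_0],\ldots,[h_{n-1}]$ span $\widetilde{H}_n^0(K)$, and in particular each $[h_p]$ is non-zero; since $\widetilde{H}_n^i(K)$ is supported in $i=0$ (Gornik's theorem), each $[h_p]$ represents a non-zero class in some $\widetilde{H}_n^{0,j_p}(K)$.

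Next I would apply the final part of Proposition \ref{sabin} with $a = h_p$ and $b = h_0$. We have $h_0 = g_0$ (the $n$-degree-$0$ part together with the fact that $g_0$ has $n$-homogeneous... wait, more carefully: $h_0$ is simply the $n$-degree-$0$ component of $g_0$, so I would instead take $b = h_0$ directly and note its $(\Z/2n\Z)$-grading is $\beta = 0 + (1-n)(w(D)+r)$). Proposition \ref{sabin} then gives
\[
{\rm qgr}([h_p]) - {\rm qgr}([h_0]) \;=\; \alpha_p - \beta \;=\; 2p \pmod{2n}.
\]
Combined with the observation that ${\rm qgr}([h_0]) \equiv (1-n)(w(D)+r) \pmod{2n}$ — which follows because $h_0$, as a chain, lies in ${\rm Gr}^{\beta}\widetilde{C}_n^0(D)$ and Proposition \ref{sabin} equates the $(\Z/2n\Z)$-grading of a cycle with its quantum grading modulo $2n$ — this yields $j_p = {\rm qgr}([h_p]) \equiv 2p + (1-n)(w(D)+r) \pmod{2n}$, which is exactly the claimed congruence.

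The main obstacle I anticipate is bookkeeping rather than conceptual: one must be careful that the $(\Z/2n\Z)$-grading coming from the internal $\Z$-grading on the matrix factorizations agrees, on the $O(D)$-summand, with the naive $n$-degree grading on the polynomial ring $\C[x_1,\ldots,x_r]/(x_i^n-1)$ once the stated filtration shift $(1-n)(w(D)+r)$ has been applied — this is the point where Gornik's identification of the $O(D)$-summand and the precise grading conventions of \cite{KR1} must be reconciled. I would verify this by tracking a single generator (say the constant $1$, or the top monomial) through the identification, rather than the whole ring; once the shift is confirmed for one element, $n$-homogeneity does the rest. The remaining steps are then immediate applications of Lemma \ref{ronen} and Proposition \ref{sabin}.
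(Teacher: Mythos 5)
Your argument is essentially the paper's own proof, just written out in more detail: the paper likewise observes that $h_p$ is $n$-homogeneous of $n$-degree $p$, that the factor of $2$ comes from the $x_i$ carrying quantum grading $2$, and that the $(1-n)(w(D)+r)$ term is the filtration shift of the $O(D)$-summand, with non-vanishing of $[h_p]$ coming from Lemma \ref{ronen} together with Gornik's theorem. The only caveat is that you invoke Proposition \ref{sabin} for an \emph{absolute} identification of the $(\Z/2n\Z)$-grading with ${\rm qgr}$ mod $2n$, whereas that proposition is stated only for relative gradings; but the paper itself notes this is merely a bookkeeping convenience and the absolute version follows from the same observation that both structures are induced from one $\Z$-grading, so your proof is sound.
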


\begin{proof}
Certainly each $h_p$ lies in a unique $(\Z / 2n \Z)$-grading.  We note that the writhe of the diagram $w(D)$ and the number $r$ of components of $O(D)$ appear in Proposition \ref{sonal} because of the grading shift of the chain group summand.  The factors of $2$ appear since the various $x_i$ appearing in the definition of the homology are assigned grading $2$.  We note also that $w(D) + r$ is always an odd number.
\end{proof}

\begin{definition}
For $K$ a knot let

\[ s_n^{\rm max}(K) = {\rm max}\{ j : \widetilde{H}_n^{0, j} (K) = \C \} {\rm ,} \]

\noindent and

\[ s_n^{\rm min}(K) = {\rm min}\{ j : \widetilde{H}_n^{0, j} (K) = \C \} {\rm .} \]
\end{definition}

It is now clear that Theorem \ref{mainthm1} follows immediately from Proposition \ref{sonal} and the following:

\begin{proposition}
\label{kwong}
For any knot $K$ we have

\[ s_n^{\rm max}(K) - s_n^{\rm min}(K) \leq 2(n-1) \rm{.} \]
\end{proposition}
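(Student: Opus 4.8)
The plan is to produce a chain endomorphism of Gornik's complex that is filtered of degree $2$ and that cyclically permutes the homology classes $[h_0],[h_1],\ldots,[h_{n-1}]$, and then to extract the bound from an elementary combinatorial observation.

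First I would record that, by Proposition \ref{sonal}, the quantum gradings $\tilde{j}_p := {\rm qgr}([h_p])$ are pairwise distinct, since $\tilde{j}_p \equiv 2p + (1-n)(w(D)+r)\pmod{2n}$ and $2\cdot 0,2\cdot 1,\ldots,2(n-1)$ are distinct modulo $2n$. As the $[h_p]$ form a basis of the $n$-dimensional space $\widetilde{H}_n^0(K)$ by Lemma \ref{ronen}, each $\widetilde{H}_n^{0,\tilde{j}_p}(K)$ has dimension at least $1$; since the dimensions of the graded pieces sum to $n$, each of these is exactly $1$-dimensional and $\widetilde{H}_n^{0,j}(K)=0$ for all other $j$. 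Hence $\{j:\widetilde{H}_n^{0,j}(K)=\C\}=\{\tilde{j}_0,\ldots,\tilde{j}_{n-1}\}$, and it suffices to prove $\max_p\tilde{j}_p-\min_p\tilde{j}_p\leq 2(n-1)$.

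Now fix a basepoint on the arc of $D$ lying on the circle of $O(D)$ labelled by the variable $x_1$. In each resolution appearing in $\widetilde{C}_n^\bullet(D)$ this arc lies on some circle, and multiplication by that circle's variable defines an endomorphism of $\widetilde{C}_n^i(D)$; it commutes with the differential because the variables act centrally on the underlying matrix factorizations, a fact independent of the choice of potential, and — since each variable has internal grading $2$ and Gornik's deformation only turns the internal differentials from graded into filtered maps — it is filtered of degree $2$, i.e. $x_1\cdot:\F^j\widetilde{C}_n^i(D)\to\F^{j+2}\widetilde{C}_n^i(D)$. On the summand of $\widetilde{C}_n^0(D)$ corresponding to $O(D)$ it is ordinary multiplication by $x_1$ in $\C[x_1,\ldots,x_r]/(x_1^n-1,\ldots,x_r^n-1)$. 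Since $x(1+x+\cdots+x^{n-1})=1+x+\cdots+x^{n-1}$ in $\C[x]/(x^n-1)$ and $x_1$ affects only the first factor of $g_0=\prod_{k=1}^r(1+x_k+\cdots+x_k^{n-1})$, we get $x_1\cdot g_0=g_0$; and as multiplication by a single variable raises $n$-degree by $1$, comparing $n$-homogeneous components in $x_1\cdot g_0=g_0$ forces $x_1\cdot h_p=h_{p+1}$ with indices taken modulo $n$. Passing to homology, $x_1\cdot[h_p]=[h_{p+1}]$, and since $x_1\cdot$ is filtered of degree $2$ this gives $\tilde{j}_{p+1}\leq\tilde{j}_p+2$ for every $p\in\Z/n\Z$.

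Finally, the combinatorial step: if real numbers $a_0,\ldots,a_{n-1}$ satisfy $a_{p+1}\leq a_p+2$ for all $p\in\Z/n\Z$, then relabelling cyclically so that $a_0=\min_p a_p$ gives $a_k\leq a_0+2k\leq a_0+2(n-1)$ for $0\leq k\leq n-1$, hence $\max_p a_p-\min_p a_p\leq 2(n-1)$. Applied to the $\tilde{j}_p$ this completes the proof. The one genuinely substantive point is the claim about multiplication by $x_1$: that the basepoint action familiar from the undeformed theory remains a chain map after Gornik's perturbation and becomes a map that is merely \emph{filtered} — rather than graded — of degree $2$. Everything else is either the Chinese–Remainder-type bookkeeping already packaged in Lemma \ref{ronen} or the elementary lemma above.
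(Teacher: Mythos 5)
Your proof is correct, but it takes a genuinely different route from the paper's. The paper proves Proposition \ref{kwong} by functoriality under $1$-handle cobordisms: taking $K_2 = U$ in Figure \ref{connectsum}, it applies the connect-sum map $F$ (filtered of degree $n-1$) to $[h^1_p]\otimes[g^2_0]$, where $[h^1_p]$ realizes $s_n^{\rm min}(K)$, and uses the fact that the image is a non-zero multiple of a Gornik generator $[g_0]$, whose quantum grading is $s_n^{\rm max}(K)$ by the preceding lemma; this yields $s_n^{\rm max}(K)\le s_n^{\rm min}(K)+2(n-1)$ in one computation. You instead stay inside a single diagram and use the basepoint action: multiplication by a marked edge variable is a filtered chain endomorphism of degree $2$ (the matrix factorizations and all maps between them are module maps over the edge ring, a fact unaffected by Gornik's perturbation), it fixes $g_0$, and hence cyclically permutes the $n$-homogeneous pieces $h_p\mapsto h_{p+1}$, giving the cyclic system of inequalities ${\rm qgr}([h_{p+1}])\le{\rm qgr}([h_p])+2$ from which the bound follows by your elementary lemma. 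Your argument is self-contained modulo the (true and standard) claim about the edge-variable action, which you correctly flag as the one substantive point, and it buys slightly more: combined with the congruence ${\rm qgr}([h_{p+1}])\equiv{\rm qgr}([h_p])+2\pmod{2n}$ from Proposition \ref{sonal}, your inequalities force exactly one of the $n$ cyclic differences to equal $2-2n$ and the rest to equal $2$, so the gradings are exactly $s_n^{\rm min},s_n^{\rm min}+2,\ldots,s_n^{\rm min}+2(n-1)$ and Theorem \ref{mainthm1} drops out directly. The paper's cobordism-map approach is less elementary here, but it is the machinery needed anyway for the concordance statement of Theorem \ref{mainthm2}, so in context it costs nothing extra.
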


\noindent To verify Proposition \ref{kwong} we need to appeal to the results of \cite{L1}, specifically those of Subsection $3.3$ which explains how, given a link $L$, $\widetilde{H}_n^{i,j}(L)$ may change under elementary $1$-handle addition to $L$.  We do not need these results in full generality; the relevant picture for this paper is that of Figure~\ref{connectsum}.

\begin{figure}
\centerline{
{
\psfrag{K1}{$K_1$}
\psfrag{K2}{$K_2$}
\psfrag{1h}{$1{\rm -handle}$}
\psfrag{ldots}{$\ldots$}
\psfrag{K}{$K$}
\psfrag{T-(D)}{$T^-(D)$}
\psfrag{T+(D)}{$T^+(D)$}
\includegraphics[height=2.5in,width=3.7in]{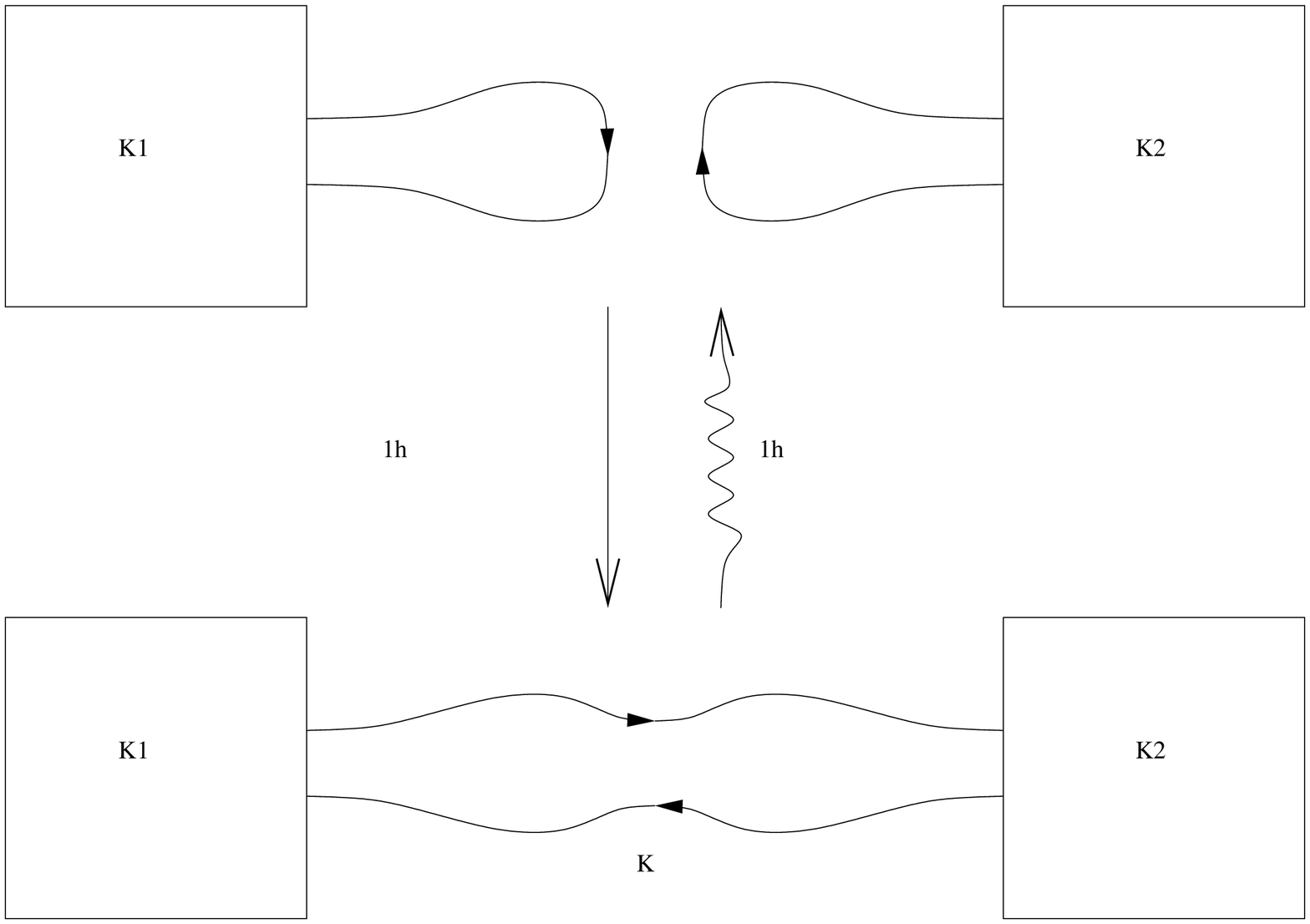}
}}
\caption{In this figure we show how the connect sum $K = K_1 \hash K_2$ of two knots $K_1$, $K_2$ is obtained from the disjoint union of the knots by a knot cobordism consisting of a single $1$-handle attachment (the straight arrow), and likewise the reverse direction (the bendy arrow).}
\label{connectsum}
\end{figure}

We state the next proposition without proof and refer interested readers to Subsection $3.3$ of \cite{L1} for more details.

\begin{proposition}
Consider the set-up of Figure $1$ where $K = K_1 \hash K_2$.  Associated to the straight arrow is a map

\[ F: \F^{j_1} \widetilde{H}_n^i (K_1) \otimes \F^{j_2} \widetilde{H}_n^i (K_2) \longrightarrow \F^{j_1 + j_2 + n -1} \widetilde{H}_n^i (K) {\rm ,} \]

\noindent and associated to the bendy arrow is a map

\[G:  \F^j \widetilde{H}_n^i (K) \longrightarrow \bigcup_{\stackrel{j_1,j_2}{j_1+j_2 = j + n -1}} \F^{j_1} \widetilde{H}_n^i (K_1) \otimes \F^{j_2} \widetilde{H}_n^i (K_2) {\rm .}\]

For $p = 0,1,\ldots , n-1$ we write $[g_p], [g^1_p], [g^2_p]$ for Gornik's basis elements of $\widetilde{H}_n ^0(K), \widetilde{H}_n ^0(K_1), \widetilde{H}_n ^0(K_2)$ respectively.  We have

\[ F([g^1_{p_1}] \otimes [g^2_{p_2}]) = \alpha [g_{p_1}]\]

\noindent where $\alpha \not= 0$ iff $p_1 = p_2$.  And

\[ G([g_p]) = \beta ([g^1_p] \otimes [g^2_p]) \]

\noindent where $\beta \not= 0$. \qed
\end{proposition}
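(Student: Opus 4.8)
The plan is to recognize $F$ and $G$ as the filtered cobordism maps induced by the saddle ($1$-handle) move of Figure~\ref{connectsum} and its reverse, and then to compute them on the generators of Definition~\ref{kvetch} by exploiting the semisimplicity of Gornik's deformed Frobenius algebra. Recall that the oriented resolution $O(D)$ contributes to $\widetilde{C}_n^0(D)$ the summand $\C[x_1,\ldots,x_r]/(x_1^n-1,\ldots,x_r^n-1)$, so that each circle carries a copy of the local algebra $A = \C[x]/(x^n-1)$; this is $\C[x]/(\widetilde{w}'(x))$ up to the nonzero factor $(n+1)$, since $\widetilde{w}'(x) = (n+1)(x^n-1)$. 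A $1$-handle joining a circle of $O(D_1)$ to a circle of $O(D_2)$ carries the oriented summand of $K_1 \sqcup K_2$ to that of $K$ by merging the two circles, while the reverse handle splits one circle into two. In Gornik's theory (as worked out in Subsection $3.3$ of \cite{L1}) the induced maps on these summands are the Frobenius multiplication $m: A \otimes A \to A$ and comultiplication $\Delta: A \to A \otimes A$, filtered of degree $n-1$ in agreement with the shifts in the statement.

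The key algebraic observation is that Gornik's generators are, up to nonzero scalars, the orthogonal idempotents of the semisimple algebra $A$. Indeed $A \cong \prod_{p=0}^{n-1}\C$, with primitive idempotents $e_p$ characterized by $e_p(\xi^q) = \delta_{pq}$ and satisfying $e_p e_q = \delta_{pq} e_p$, $\sum_p e_p = 1$. On a single circle $g_p = (x^n-1)/(x-\xi^p) = \prod_{q\neq p}(x - \xi^q)$ vanishes at every $\xi^q$ with $q \neq p$, so $g_p = \mu_p e_p$ with $\mu_p = \prod_{q\neq p}(\xi^p - \xi^q) = n\,\xi^{-p} \neq 0$. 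For the general $r$-circle generator of Definition~\ref{kvetch} we then have $g_p = \bigotimes_{k=1}^r \mu_p e_p^{(k)}$ under the identification $\C[x_1,\ldots,x_r]/(x_1^n-1,\ldots,x_r^n-1) \cong \bigotimes_k \C[x_k]/(x_k^n-1)$; in particular each $g_p$ lies in the single isotypic component indexed by $\xi^p$.

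The computation then follows. For the merge, $F([g^1_{p_1}] \otimes [g^2_{p_2}])$ is a nonzero scalar times $m$ applied to the relevant products of $e_{p_1}, e_{p_2}$, and since $m(e_{p_1} \otimes e_{p_2}) = e_{p_1} e_{p_2} = \delta_{p_1 p_2} e_{p_1}$ we obtain $F([g^1_{p_1}] \otimes [g^2_{p_2}]) = \alpha [g_{p_1}]$ with $\alpha$ a nonzero multiple of $\delta_{p_1 p_2}$, so $\alpha \neq 0$ iff $p_1 = p_2$. For the split, semisimplicity forces the comultiplication to be diagonal on idempotents: taking $\epsilon$ for the Gornik trace with $\theta_p = \epsilon(e_p) \neq 0$ (nondegeneracy of the Frobenius pairing), the adjointness relation $\langle \Delta(a), b \otimes c \rangle = \langle a, bc \rangle$ forces $\Delta(e_p) = \theta_p^{-1} e_p \otimes e_p$, hence $G([g_p]) = \beta([g^1_p] \otimes [g^2_p])$ with $\beta \neq 0$. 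That each named class is genuinely nonzero in homology is guaranteed because $[g_p]$, $[g_p^i]$ are Gornik basis elements.

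The main obstacle is the first step: verifying that the induced cobordism maps restrict on the oriented summand to honest multiplication and comultiplication in $A$, rather than to these operations corrected by terms landing in the other resolution summands, and that any such corrections do not disturb the resulting homology classes. This is precisely the content imported from Subsection $3.3$ of \cite{L1}; once $F$ and $G$ are known to act on the oriented summand as $m$ and $\Delta$ of the deformed Frobenius algebra and to descend to the stated maps on $\widetilde{H}_n^0$, all that remains is the elementary idempotent linear algebra above. A secondary bookkeeping point is tracking the $n-1$ filtration shift, which matches the filtered degree of the saddle and is consistent with the $(\Z/2n\Z)$-grading of Proposition~\ref{sabin} applied to the $g_p$.
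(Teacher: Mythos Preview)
The paper does not prove this proposition at all: it explicitly states the result ``without proof'' and refers the reader to Subsection~3.3 of \cite{L1}. Your sketch is therefore strictly more than what the paper offers, and your identification of the key imported ingredient --- that the saddle cobordism induces, on the oriented-resolution summand, the Frobenius multiplication and comultiplication of $A=\C[x]/(x^n-1)$ with filtered degree $n-1$ --- is exactly the content being cited.

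The algebraic layer you add on top is correct and is the expected computation: $A$ is semisimple, $g_p$ is a nonzero scalar multiple of the primitive idempotent $e_p$ (your evaluation $\mu_p = n\xi^{-p}$ is right, via $\frac{d}{dx}(x^n-1)\big|_{x=\xi^p}$), multiplication kills mixed idempotents, and nondegeneracy of the Frobenius form forces $\Delta(e_p)$ to be a nonzero multiple of $e_p\otimes e_p$. So modulo the same appeal to \cite{L1} that the paper itself makes, your argument is complete and matches the intended route; there is no genuinely different idea here, only an explicit unpacking of what the citation covers.
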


With this proposition in hand we are almost ready to prove Proposition \ref{kwong} and hence Theorem \ref{mainthm1}.  We just need one more easy lemma.

\begin{lemma}
If $g \in \widetilde{C}_n^0(D)$ is one of Gornik's basis elements of $\widetilde{H}_n^0(K)$ then 

\[{\rm qgr}([g]) = s_n^{\rm max}(K) {\rm .} \]
\end{lemma}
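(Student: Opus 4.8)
The plan is to rewrite Gornik's basis element $g$ in terms of the $n$-homogeneous basis $h_0,\dots,h_{n-1}$ of Lemma \ref{ronen} and then read off its quantum grading directly from Proposition \ref{sonal}. Say $g = g_t$. Inside the proof of Lemma \ref{ronen} we already established the identity
\[ g_t = \sum_{p=0}^{n-1} \xi^{-t(p+r)} h_p \]
in the subspace $V \subseteq \widetilde{C}_n^0(D)$, and every coefficient $\xi^{-t(p+r)}$ is a nonzero root of unity. Since each $h_p$ is a cycle (Proposition \ref{sonal}), this descends to $[g_t] = \sum_p \xi^{-t(p+r)}[h_p]$ in $\widetilde{H}_n^0(K)$.

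Next I would invoke Proposition \ref{sonal}: each $[h_p]$ is a non-zero class with ${\rm qgr}([h_p]) = j_p$, where $j_p \equiv 2p + (1-n)(w(D)+r) \pmod{2n}$. The crucial elementary point is that the residues $2p \pmod{2n}$ for $p=0,1,\dots,n-1$ are pairwise distinct, hence the integers $j_p$ are pairwise distinct. Writing $J = \max_p j_p$, attained at the unique index $p_0$, we get $[g_t] \in \F^{J}\widetilde{H}_n^0(K)$; and in $\F^{J}\widetilde{H}_n^0(K)/\F^{J-1}\widetilde{H}_n^0(K) = \widetilde{H}_n^{0,J}(K)$ the images of all $[h_p]$ with $p \neq p_0$ vanish (since $j_p < J$), so the image of $[g_t]$ there is $\xi^{-t(p_0+r)}$ times the non-zero image of $[h_{p_0}]$, hence non-zero. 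Therefore ${\rm qgr}([g_t]) = J = \max_p j_p$, and in particular this is independent of $t$.

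Finally I would identify $\max_p j_p$ with $s_n^{\rm max}(K)$. By Gornik's theorem together with Lemma \ref{ronen}, the classes $[h_0],\dots,[h_{n-1}]$ form a basis of the $n$-dimensional space $\widetilde{H}_n^0(K)$; since they occupy $n$ pairwise distinct quantum gradings, the identity $\dim_\C \widetilde{H}_n^0(K) = \sum_j \dim_\C \widetilde{H}_n^{0,j}(K)$ forces each $\widetilde{H}_n^{0,j_p}(K)$ to be exactly $\C$ and all other graded pieces to vanish. Hence $s_n^{\rm max}(K) = \max_p j_p = {\rm qgr}([g])$.

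I do not expect a genuine obstacle here; this is the easy lemma that the surrounding text promises. The only points needing a little care are the two standard facts about the (exhaustive, bounded-below) quantum filtration on $\widetilde{H}_n^0(K)$ used above: that a linear combination of basis vectors lying in pairwise distinct filtration levels, all with non-zero coefficients, has quantum grading equal to the maximum of their individual gradings, and that such a basis pins down the associated graded to be one-dimensional in exactly those degrees. If one wanted to be completely self-contained one could prove both by downward induction on the filtration level, but it seems reasonable simply to state them.
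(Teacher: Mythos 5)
Your proof is correct and follows the same route as the paper: the paper's own (one-sentence) argument is precisely the observation that $g$ is a linear combination of the $h_p$ with all coefficients non-zero and that exactly one $[h_p]$ has quantum grading $s_n^{\rm max}(K)$. You have simply supplied the explicit coefficients $\xi^{-t(p+r)}$ and the standard filtered-vector-space bookkeeping that the paper leaves implicit.
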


\begin{proof}
This follows from the observation that the quantum grading of exactly one of the $[h_p]$ must be $s_n^{\rm max}(K)$, and $g$ is a linear combination of the $h_p$, with all coefficients non-zero.
\end{proof}

\begin{proof}[Proof of Proposition \ref{kwong}]
In Figure \ref{connectsum}, let $K=K_1$ and let $K_2 = U$, the unknot.  Choose $p \in \{ 0,1, \ldots ,n-1 \}$ so that $[h^1_p]$ is non-zero in $\widetilde{H}_n^{0, s_n^{\rm min}}(K_1)$.  Now $h^1_p$ is expressible as a linear combination of Gornik's generators $g^1_0, g^1_1, \ldots, g^1_{n-1}$.  Assume without loss of generality that the coefficient of $g^1_0$ in this linear combination is non-zero.  Then we have

\begin{eqnarray*}
s_n^{\rm max}(K) &=& {\rm qgr}([g_0]) \\
&=& F([h^1_p] \otimes [g^2_0]) \\
&\leq& {\rm qgr}([h^1_p]) + {\rm qgr}([g^2_0]) + n -1 \\
&=& s_n^{\rm min}(K) + n - 1 + n - 1 \\
&=& s_n^{\rm min}(K) + 2n -2 {\rm .}
\end{eqnarray*}
\end{proof}

Now Theorem \ref{mainthm1} follows easily.

\begin{proof}[Proof of Theorem \ref{mainthm1}]
Propositions \ref{sonal} and \ref{kwong} combine to imply Theorem \ref{mainthm1}
\end{proof}

We can use the same technique used in the proof of Proposition \ref{kwong} to give a proof of Theorem \ref{mainthm2}.

\begin{proof}[Proof of Theorem \ref{mainthm2}]
To check we have a homomorphism, it is enough to show that $s_n$ respects the group operations.  In other words if $K = K_1 \hash K_2$ we wish to see

\[ s_n(K) = s_n(K_1) + s_n(K_2) {\rm .} \]

\noindent Again we refer to Figure \ref{connectsum} and choose $p \in \{ 0,1, \ldots ,n-1 \}$ so that $[h^1_p]$ is non-zero in $\widetilde{H}_n^{0, s_n^{\rm min}}(K_1)$ and assume without loss of generality that the coefficient of $g^1_0$ in the expansion of $h^1_p$ is non-zero.

We observe

\begin{eqnarray*}
s_n(K_1) + s_n(K_2) &=& s_n^{\rm min}(K_1) + s_n^{\rm max}(K_2) \\
&=& {\rm qgr}([h^1_p] \otimes [g^2_0]) \\
&\geq& {\rm qgr}(F([h^1_p] \otimes [g^2_0])) - n + 1 \\
&=& {\rm qgr}([g_0]) - n + 1 \\
&=& s_n^{\rm max}(K) - n + 1 \\
&=& s_n(K) {\rm ,}
\end{eqnarray*}

\noindent and

\begin{eqnarray*}
s_n(K_1) + s_n(K_2) &=& s_n^{\rm max}(K_1) + s_n^{\rm max}(K_2) - 2n +2 \\
&=& {\rm qgr}([g^1_0] \otimes [g^2_0]) - 2n + 2 \\
&=& {\rm qgr}(G([g_0])) - 2n + 2 \\
&\leq& {\rm qgr}([g_0]) + n -1 - 2n + 2 \\
&=& s_n^{\rm max}(K) - n + 1 \\
&=& s_n(K) {\rm .}
\end{eqnarray*}
\end{proof}

Finally we indicate the proof of Corollary \ref{lobbtomomi}.

\begin{proof}[Proof of Corollary \ref{lobbtomomi}]
The main tool is due to Kawamura \cite{Kaw} in which she gives an explicit estimate of $s(K)$ and $\tau(K)$ depending on a diagram $D$ of $K$.  In deriving this estimate she only makes use of the formal properties of $s$ and $\tau$ analogous to Corollary \ref{lobbwu} and Theorem \ref{mainthm2}, hence her arguments also apply to $s_n$.

In \cite{L2}, the author independently derives this estimate for $s(K)$, using an algebraic argument rather than the formal properties of $s$.  Proposition $1.5$ of \cite{L2} shows that the estimates are tight given an alternating diagram $D$ of $K$, but the proof of this Proposition does not use the definition of $s$ and hence also shows that the bounds on $s_n(K)$ are tight for alternating knots.

Therefore since we know appropriately rescaled versions of this Corollary hold for $s$ and for $\tau$, it also holds for $s_n$.
\end{proof}


\begin{thebibliography}{99999}

\bibitem{G} B.~Gornik, Note on Khovanov link cohomology, 2004, arXiv:math.QA/0402266

\bibitem{Kaw} T. ~Kawamura, An estimate of the Rasmussen invariant for links, \emph{forthcoming paper.}

\bibitem{K1} M. ~Khovanov, A categorification of the Jones polynomial, \emph{Duke Math. J.}, vol 101.3 (200), 359-426.

\bibitem{KR1} M. ~Khovanov and L. ~Rozansky, Matrix factorizations and link homology I, \emph{ Fundamenta Mathematicae}, vol.199 (2008), 1-91.  

\bibitem{KR2} M. ~Khovanov and L. ~Rozansky, Matrix factorizations and link homology II, \emph{Geometry and Topology}, vol.12 (2008), 1387--1425.

\bibitem{L1} A. ~Lobb, A slice genus lower bound from sl(n) Khovanov-Rozansky homology,  \emph{Adv. Math.}  222  (2009), 1220--1276.

\bibitem{L2} A. ~Lobb, Computable bounds for Rasmussen's concordance invariant, \emph{to appear in Compositio Mathematica}

\bibitem{Lee} E.~Lee, An endomorphism of the Khovanov invariant, \emph{Adv. Math.} 197, 2005, 554-586

\bibitem{OzSz} P. ~Ozsv{\'a}th and Z. ~Szab{\'o}, Knot Floer homology and the four-ball genus. \emph{Geom. Topol.} 7 (2203) 615--639.

\bibitem{Ras1} J. ~Rasmussen, Khovanov homology and the slice genus, \emph{Invent. Math.} 182 (2010) 419--447 

\bibitem{Ras2} J. ~Rasmussen, Some differentials on Khovanov-Rozansky homology, arXiv:math/0607544v2

\bibitem{Ras3} J. ~Rasmussen, Floer homology and knot complements, arXiv:math.GT/0306378

\bibitem{Wu1} H. ~Wu, On the quantum filtration of the Khovanov-Rozansky cohomology,  \emph{Adv. Math.}  221  (2009), 54--139.

\bibitem{Wu2} H. ~Wu, The Khovanov-Rozansky Cohomology and Bennequin Inequalities, 2007, arXiv:math/0703210.

\end{thebibliography}
\end{document}